\newtheorem{thm}{Theorem}[section] 
\theoremstyle{definition}
\title{Numerical studies of Thompson's group $F$ and related groups}
\author{Andrew Elvey Price and Anthony J Guttmann}
\begin{document}

\begin{abstract}
We have developed polynomial-time algorithms to generate terms of the cogrowth series for groups $\mathbb{Z}\wr \mathbb{Z},$ the lamplighter group,    $(\mathbb{Z}\wr \mathbb{Z})\wr \mathbb{Z}$ and the Navas-Brin group $B.$ We have also given an improved algorithm for the coefficients of Thompson's group $F,$ giving 32 terms of the cogrowth series. We develop numerical techniques  to extract the asymptotics of these various cogrowth series. We present improved rigorous lower bounds on the growth-rate of the cogrowth series for Thompson's group $F$ using the method from \cite{HHR15} applied to our extended series. We also generalise their method by showing that it applies to loops on any locally finite graph. Unfortunately, lower bounds less than 16 do not help in
determining amenability.

Again for Thompson's group $F$ we prove that, if the group is amenable, there cannot be a sub-dominant stretched exponential term in the asymptotics\footnote{ }.  Yet the numerical data provides compelling evidence for the presence of such a term. This observation suggests a potential path to a proof of non-amenability: If the universality class of the cogrowth sequence can be determined rigorously, it will likely prove non-amenability.

We estimate the asymptotics of the cogrowth coefficients of $F$ to be
$$ c_n \sim c \cdot \mu^n \cdot \kappa^{n^\sigma \log^\delta{n}} \cdot n^g,$$ where $\mu \approx 15,$ $\kappa \approx 1/e,$ $\sigma \approx 1/2,$ $\delta \approx 1/2,$ and $g \approx -1.$ The growth constant $\mu$ must be 16 for amenability. These two approaches, plus a third based on extrapolating lower bounds,  support the conjecture \cite{ERvR15, HHR15} that the group is not amenable.

\end{abstract}

\maketitle
\section{Introduction}\label{sec:intro}

In an attempt to find compelling evidence for the amenability or otherwise of Thompson's group $F$, we have studied, numerically, the co-growth sequence of a number of infinite, finitely generated amenable groups whose asymptotics are, in most cases, partially or fully known. We have chosen a number of examples with increasingly complex asymptotics. Using the experience and insights gained from these examples, we turn to a study of Thompson's group $F$, having first developed an improved algorithm for the generation of the co-growth sequence, which we evaluate to O$(x^{32}).$

The cogrowth series of a group ${\mathcal G}$ with finite, inverse closed, generating set $S$ is $$C_{\mathcal G} = \sum_{n \ge 0} c_n x^n,$$ where $c_{n}$ is the number of words $w$ of length $2n$ over the alphabet $S,$ which satisfy $w=_{\mathcal G}1$ i.e. $w$ is the identity in the group ${\mathcal G}.$ There are many equivalent definitions of amenability. A standard one is that a group $G$ is amenable if it admits a left-invariant finitely additive probability measure $\mu.$ A consequence of the Grigorchuk-Cohen  \cite{G78, C82} theorem is that $G$ is amenable if and only if the radius of convergence of $C_{G}$ is $1/|S|^{2}$. In particular, Thompson's group $F$ amenable if and only if its cogrowth sequence has exponential growth rate 16.

We have developed new, polynomial-time algorithms to generate coefficients for the lamplighter group, and for general wreath product groups, $W_d = \mathbb{Z}\wr_d \mathbb{Z}.$ We also give a polynomial time algorithm for the cogrowth coefficients of the Navas-Brin group, and an improved algorithm to generate the coefficients of Thompson's group $F,$ generating the cogrowth sequences to O$(x^{128})$ and O$(x^{32})$ for $B$ and $F$ respectively.

The amenable group introduced independently by Navas \cite{N04} and Brin \cite{B05}, which we call the Navas-Brin group $B,$ is a subgroup of Thompson's  group $F$, and is defined as an infinite wreath product, with an extra generator which commutes each generator of the infinite wreath product to the next one. 
It has 2 generators, so the growth rate of the cogrowth sequence is 16. It also has a sub-exponential growth term that is very close to exponential, and so makes the growth rate difficult to estimate accurately with the number of terms at our disposal.

Using results of Pittet and Sallof-Coste \cite{PS-C00, PS-C02}, we prove that the cogrowth coefficients $c_n$ of Thompson's group $F$ satisfy $$c_n < 16^n\cdot \lambda^{-n^\kappa}$$ for any real numbers $\kappa < 1,$ and $\lambda >1.$ That is to say, if Thompson's group $F$ is amenable, then its asymptotics cannot contain a stretched-exponential term\footnote{We define {\em stretched exponential} more broadly than usual. It normally refers to a term of the form $e^{-t^\beta},$ with $t > 0$ and $0 < \beta < 1.$ We allow behaviour such as $e^{-t^\beta \cdot \log^\delta{t}},$  or indeed any appropriate logarithmic term. We do not have a name for sub-exponential growth of the form $e^{-t/\log^\delta{t}},$ with $\delta >0 $ (or appropriate logarithmic function) which is the type of term that must be present in the cogrowth series of the Navas-Brin group, and indeed in Thompson's group $F$ if it were amenable. }. Such a term is present in the asymptotics of the lamplighter group $L$ and the family of groups  $W_d.$  Furthermore, our numerical study reveals compelling evidence for the {\em presence} of such a term in the asymptotics of the coefficients of $F.$ This is our first strong evidence that Thompson's group $F$ is not amenable. Our second piece of evidence is the estimation of the growth constant. For amenability, the growth constant must be 16. We find that it is very close to 15.0 (we do not suggest it is exactly 15, but that is certainly a possibility).

Our numerical analysis relies on a number of methods that are well-known in the statistical mechanics and enumerative combinatorics community. Many are reviewed in \cite{G89} and \cite{GJ09}. For studies of the cogrowth asymptotics we primarily rely on the behaviour of the {\em ratio} of successive coefficients, as irrespective of the sub-dominant asymptotics, this ratio must go to the growth constant in the limit as the order of the coefficients goes to infinity. 

One new technique that we make use of in our study of the groups $B$ and $F$ is that of {\em series extension} \cite{G16}. In the case of group $B,$ we have 128 exact coefficients, but predict a further 590 ratios (and terms) with an estimated accuracy of, at worst, 1 part in $5 \times 10^{-7}.$ Having these extra (approximate) terms greatly improves the quality of the analysis we can perform. Similarly, for group $F,$ we use 32 exact terms to predict a further 200 ratios (and terms) with an estimated accuracy of 1 part in $4 \times 10^{-5}.$ This level of accuracy is more than sufficient for the graphical techniques we use to extract the asymptotics.

Another approach to estimating the growth rate was introduced by Haagerup, Haagerup and Ramirez-Solano in \cite{HHR15} who proved that the cogrowth sequence of Thompson's group  $F$  is given by the moments of a probability measure. We extend this to prove that this observation applies to the cogrowth sequence of any Cayley graph. In this way a sequence of rigorous lower bounds to the growth constant of the cogrowth series can be constructed. This approach also gives some stronger, non-rigorous, pseudo-bounds. Further details of this method, and some results, are given in section \ref{sec:moments}.

The simplest examples of groups we have chosen have asymptotics of the form $$c_n \sim c \cdot \mu^n \cdot n^g,$$ where $c$ is a constant, $\mu$ is the {\em growth constant} and $g$ is an exponent.

The first example of such a group is $\mathbb{Z}^2,$ which is a particularly simple case as both the coefficients and generating function are exactly known. In fact $c_n = {2n \choose n}^2,$ and the generating function $C_{\mathbb{Z}^2} =  2 {\bf K} \left ( \frac{4\sqrt{x}}{\pi} \right ),$ where ${\bf K} $ is the complete elliptic integral of the first kind.

The second example is the Heisenberg group, for which the asymptotic form of the coefficents is known \cite{G11} to be $c_n \sim 0.5 \cdot 16^n \cdot n^{-2},$ corresponding to a generating function $$C_{Heisenberg} \sim \frac{1}{2} (1-16x)\log(1-16x).$$ We have calculated 90 terms of the generating function, and show that this is sufficient to get a very precise asymptotic representation of the coefficients.

The next level of asymptotic complexity arises when there is an additional stretched-exponential term, so that the coefficients of the generating function behave as $$c_n \sim c \cdot \mu^n \cdot \kappa^{n^\sigma} \cdot n^g,$$ where $0 < \kappa < 1,$ and $0 < \sigma < 1.$ There is no known simple expression for the corresponding generating function in such cases\footnote{See, for example \cite{G15} for a discussion of this point, and further examples of such generating functions.}. The lamplighter group $L$ is the wreath product of the group of order two with the integers, $L=\mathbb{Z}_{2}\wr\mathbb{Z}.$  The growth rate is known, $\mu =9,$ and from Theorem 3.5 of \cite{PS-C02} it follows that $\sigma = 1/3,$ and from \cite{R03} we know that the exponent $g=1/6.$ So for the lamplighter group, $c_n \sim c \cdot 9^n \cdot \kappa^{n^{1/3}} \cdot n^{1/6}.$ Methods to extract the asymptotics from the coefficients have been developed, and are described in \cite{G15}. We give a polynomial time algorithm to generate the coefficients, and use it to determine the first 201 coefficients, from which we are able to estimate the correct values of the parameters $\mu,$ $ \sigma$ and $g.$

We next consider wreath products $W_d=\mathbb{Z}\wr_d \mathbb{Z}.$ In that case the exponent of the stretched-exponential term also includes a fractional power of a logarithm. Coefficients of the generating function behave as given by Theorem 3.11 in \cite{PS-C02}, so that $$c_n \sim c \cdot \mu^n \cdot \kappa^{n^\sigma \log^\delta{n}} \cdot n^g,$$ where $0 < \kappa < 1,$ and $0 < \sigma, \, \delta < 1.$ 

For $ d=1,$ one has $\mu=16,$ $ \sigma=1/3,$ $\delta=2/3$ and $g$ is not known. For $ d=2,$ one has, again by Theorem 3.11 in \cite{PS-C02}, $\mu=36,$ $ \sigma=1/2$ and $\delta=1/2.$ For general $ d,$ $ \mu=( 2d )^2,$ $\sigma= d/( d+2),$ and $\delta=2/( d+2).$ 

Note that this dimensional dependence of the exponent $\sigma$ of the stretched-exponential term appears to be a common feature among a broad class of problems. For example, if one considers the problem of a self-avoiding walk attached to a surface at its origin (or a Dyck path or a Motzkin path) and pushed toward the surface at its end-point (or its highest vertex), then, as shown in \cite{BGJL15} there is a stretched-exponential term in the asymptotics of the coefficients, with exponent $\sigma=1/(1+ d_f),$ where $ d_f$ is the fractal dimension of the walk/path. Whether this dimensional dependence is in fact a ubiquitous feature of such stretched-exponential terms remains an open question.

We have studied two examples, $W_1=\mathbb{Z}\wr \mathbb{Z}$ and $W_2=(\mathbb{Z}\wr \mathbb{Z}) \wr \mathbb{Z},$ based on the series we have generated of 276 and 133 terms respectively. We find that the presence of the confluent logarithmic term in the exponent makes the analysis significantly more difficult, but we can nevertheless accurately estimate the growth constant $\mu$ and less precisely estimate the sub-dominant growth rate $\kappa$ and the exponents $\sigma$ and $\delta$. Our estimates of the exponent $g$ are not precise enough to be useful.

We then turn to a contrived example, a constructed series with the asymptotics of $W_d=\mathbb{Z}\wr_d \mathbb{Z},$ with $ d=98.$ As $ d$ increases, the exponent in the stretched-exponential term gets closer to 1, and so this term behaves more and more like the dominant exponential growth term $\mu^n.$ We show that estimating the correct growth constant even approximately requires careful analysis, and appropriate techniques. This serves as a caution, and underlies that our conclusions regarding the non-amenability of Thompson's group $F$ assumes the absence of some unknown functional pathology.

Finally we study two groups whose behaviour is not fully known. The first is the Navas-Brin group $B.$  We give a polynomial-time algorithm to generate the coefficients, and in this way generate the first 128 terms, then use these to estimate the next 590 ratios.  This group has a sub-exponential growth term that is very close to exponential, and so makes the growth rate difficult to estimate accurately with the number of terms at our disposal. The second is Thompson's group $F$ where we have 32 exactly known terms, and 200 estimated ratios of terms.

The makeup of the paper is as follows. In Section \ref{sec:series} we describe the algorithms developed for the cogrowth series of the lamplighter group $L$, $W_1,$ $W_2,$  $B$ and Thompson's group $F.$ In Section \ref{sec:thom} we discuss the possible asymptotic form of the cogrowth series for Thompson's group $F,$ and prove the absence of a stretched-exponential term. In Section \ref{sec:moments} we develop the idea that the cogrowth coefficients can be represented as the sequence of moments of a probability measure. With this identification we establish rigorous lower-bounds on the growth constant for Thompson's group $F.$ In Section \ref{sec:analysis} we analyse the series expansions for the cogrowth series of all the groups we have mentioned above, apart from $B$ and $F.$ Section \ref{sec:extension} is devoted to a description of the method of series extension that we employ, and in  Sections \ref{sec:Brin} and \ref{sec:Thompson} we use this method and the techniques discussed in the previous section to analyse the Navas-Brin group $B$ and Thompson's group $F.$ Section \ref{sec:conclusion} comprises a discussion and conclusion.


\section{Series generation}\label{sec:series}
In this section we describe the algorithms we have used to compute the terms of the cogrowth sequence of various groups. We start by describing polynomial time algorithms which we have found and used for the groups $L$, $W_1,$ $W_2,$ and $B$. Finally we describe the algorithm which we have used for Thompson's group $F$. The first 50 coefficients for the group $B$ are given in Table \ref{tab:nb50},  while the coefficients of the cogrowth series of $F$ are given in Table \ref{tab:thomp}.

\subsection{Wreath Products $G \wr {\mathbb Z}$}
Let $G$ be a group with finite generating set $S$. We will describe a polynomial time algorithm for computing the cogrowth series of $G\wr\mathbb{Z}$, with respect to the generating set $\{a\}\cup S$, where $a$ generates $\mathbb{Z}$, given the corresponding series for $G$. In particular, this give a polynomial time algorithm to compute the cogrowth of the lamplighter group $\mathbb{Z}_{2}\wr\mathbb{Z}$ as well as groups such as $\mathbb{Z}\wr\mathbb{Z}$ and $( \mathbb{Z}\wr\mathbb{Z})\wr\mathbb{Z}$.

Let $a_{k}$ be the number of loops of length $k$ in $G$. For example, if $G=\mathbb{Z}$, then $a_{2k}={2k\choose k}$ and $a_{2k+1}=0$ for all $k\in\mathbb{Z}_{\geq0}$. Then for each positive integer $n$, define the generating function $P_{n}(x)$ by
\[P_{n}(x)=\sum_{j=0}^{\infty}{j+n-1\choose n-1}a_{j}x^j.\]
This is the generating function for $n$-tuples of words $w_{1},w_{2},\ldots,w_{n}\in S^{*}$ such that $\overline{w_{1}\ldots w_{n}}=1$, counted by the length of the word $w_{1}\ldots w_{n}$.

Given a loop $l$ in $G\wr\mathbb{Z}$, we define the base loop $l'$ of $l$ to be the loop in $\mathbb{Z}$ made up of only the terms $a$ and $a^{-1}$ in $l$. For each positive integer $i$, let $c_{i}$ be the number of steps in the baseloop $l'$ from $a^{i-1}$ to $a^{i}$ (which is the same as the number of steps from $a^{i}$ to $a^{i-1}$) and let $d_{i}$ be the number of steps from $a^{-i+1}$ to $a^{-i}$. Let $m$ and $n$ be maximal such that $c_m,d_n>0$. Then the length of $l'$ is equal to 
\[\sum_{i=1}^{m}2c_{i}+\sum_{j=1}^{n}2d_{j}.\]

Let $l'=a_{1}a_{2}\ldots a_{|l'|}$ and $l=w_{1}a_{1}w_{2}\ldots a_{|l'|}w_{|l'|+1}$, where each $w_{i}$ is a word in $(S\cup S^{-1})^{*}$. We say that the height of one of the subwords $w_{i}$ is equal to the integer $p$ which satisfies $a^{p}=a_{1}\ldots a_{i}$. Then $l$ is a loop if and only if for any height $h$, concatening all of the words $w_{i}$ at height $h$ creates a loop in $G$. Hence the generating function for the sections at height $h$ is $P_{r}(x)$ where $r$ is the number of these sections. If $h>0$ then $r=c_{h}+c_{h+1}$, if $h<0$ then $r=d_{-h}+d_{-h+1}$ and if $h=0$ then $r=c_1+d_1+1$. Hence,  by considering the sections of $l$ at each height separately, we see that the generating function for loops $l$ with base loop $l'$ is equal to
\begin{equation}\label{gengbase}x^{|l'|}P_{d_{n}}(x)P_{c_{m}}(x)P_{c_{1}+d_{1}+1}(x)\prod_{i=1}^{m-1}P_{c_{i}+c_{i+1}}(x)\prod_{j=1}^{n-1}P_{d_{j}+d_{j+1}}(x),\end{equation}
assuming that $m,n\geq1$. Similarly, if $m=0$ and $n\geq1$, the generating function is
\[x^{|l'|}P_{d_{n}}(x)P_{d_{1}+1}(x)\prod_{j=1}^{n-1}P_{d_{j}+d_{j+1}}(x).\]
If $n=0$ and $m\geq1$, the generating function is
\[x^{|l'|}P_{c_{m}}(x)P_{c_{1}+1}(x)\prod_{i=1}^{m-1}P_{c_{i}+c_{i+1}}(x).\]
Finally, if $m=0$ and $n=0$, then the generating function is $P_{1}(x).$
So we now need to sum this over all possible base loops $l'$.

For a given pair of sequences $c_{1},\ldots,c_{m},d_{1},\ldots,d_{n}$, the number of such base loops is equal to
\begin{equation}\label{bloops}{c_{1}+d_{1}\choose c_{1}}\prod_{i=1}^{m-1}{c_{i}+c_{i+1}-1\choose c_{i}-1}\prod_{j=1}^{n-1}{d_{j}+d_{j+1}-1\choose d_{j}-1}.\end{equation}
This is because from each vertex $i>0$ we can choose the order of the outgoing steps, except that the last one must be a left step, and there are $c_{i}-1$ other left steps and $c_{i+1}$ right steps. Hence there are ${c_{i}+c_{i+1}-1\choose c_{i}-1}$ possible orders of the steps leaving any vertex $i>0$, and similarly ${d_{j}+d_{j+1}-1\choose d_{j}-1}$ possible orders of the steps leaving any vertex $-j$ for $j>0$. Finally, there are ${c_{1}+d_{1}\choose c_{1}}$ possible orders of the steps leaving the vertex 0. It is easy to see that for any possible choice of these orders there is exactly one corresponding base loop $l'$.

Now using (\ref{gengbase}) and (\ref{bloops}) it follows that for any pair of sequences $c_{1},\ldots,c_{m},d_{1},\ldots,d_{n}$, with $m,n\geq1$, the generating function for the corresponding loops $l$ in $G\wr\mathbb{Z}$ is equal to 
\scriptsize
\begin{equation}\label{long}x^{2c_{1}+2d_{1}}{c_{1}+d_{1}\choose c_{1}}P_{d_{n}}P_{c_{m}}P_{c_{1}+d_{1}+1}\prod_{i=1}^{m-1}x^{2c_{i+1}}{c_{i}+c_{i+1}-1\choose c_{i}-1}P_{c_{i}+c_{i+1}}\prod_{j=1}^{n-1}x^{2d_{j+1}}{d_{j}+d_{j+1}-1\choose d_{j}-1}P_{d_{j}+d_{j+1}}.\end{equation}
\normalsize
If $m=0$ and $n\geq1$, the generating function is
\begin{equation}\label{notlong}x^{2d_{1}}P_{d_{n}}P_{d_{1}+1}\prod_{j=1}^{n-1}x^{2d_{j+1}}{d_{j}+d_{j+1}-1\choose d_{j}-1}P_{d_{j}+d_{j+1}}.\end{equation}
If $m\geq1$ and $n=0$ we get a similar generating function, and if $m=n=0$ we get $P_{1}(x)$.

To calculate these we define some new power series $\Omega_{d}(x)$ by
\[\Omega_{d}(x)=\sum P_{d_{n}}\prod_{j=1}^{n-1}x^{2d_{j+1}}{d_{j}+d_{j+1}-1\choose d_{j}-1}P_{d_{j}+d_{j+1}}(x),\]
where the sum is over all sequences $n,d_{1},d_{2},\ldots,d_{n}$ with $d_{1}=d$. Then it follows immediately from \eqref{long} and \eqref{notlong} that the generating function $F$ for the cogrowth series series of $G\wr\mathbb{Z}$ is given by
\begin{equation}\label{FinO}F(x)=\left(\sum_{c,d=1}^{\infty}x^{2c+2d}{c+d\choose c}P_{c+d+1}(x)\Omega_{d}(x)\Omega_{c}(x)\right)+2\left(\sum_{d=1}^{\infty}x^{2d}P_{d}(x)\Omega_{d}(x)\right)+P_{1}(x).\end{equation}
So now we just need to calculate $\Omega_{d}(x)$ for each positive integer $d$. First, the contribution to $\Omega_{d}$ from the case where $n=1$ is $P_{d_{n}}=P_{d_{1}}=P_{d}$. The contribution from the case where $n=1$ and $d_{2}=b$ for some fixed positive integer $b$ is
\[x^{2b}{d+b-1\choose d-1}P_{d+b}(x)\Omega_{b}(x).\]
Hence, we have the equation
\begin{equation}\label{Om}\Omega_{d}(x)=P_{d}(x)+\sum_{b=1}^{\infty}x^{2b}{b+d-1\choose d-1}P_{b+d}(x)\Omega_{b}(x).\end{equation}
Using this equation we can calculate the coefficient of $x^{k}$ in $\Omega_{d}$ of $x$ in terms of coefficients of $x^{j}$ in $\Omega_{b}(x)$ where we only need to consider $j,b$ satisfying $2b+j\leq k$ (hence $j\leq k-2$). This takes polynomial time using a simple dynamic program.

\subsection{The Navas-Brin group $B$} \label{subsec:nb}
In this section we adapt the previous algorithm to calculate the cogrowth series for the Navas-Brin group $B$. Again this is a polynomial time algorithm, however the polynomial has higher degree than the one for the previous section. The group $B$ is defined as the semi-direct product
\[\left(\ldots\wr\mathbb{Z}\wr\mathbb{Z}\wr\mathbb{Z}\wr\mathbb{Z}\wr\ldots\right)\rtimes\mathbb{Z},\]
where the copies of $\mathbb{Z}$ in the wreath product are generated by $\ldots, a_{2},a_{1},a_{0},a_{-1},a_{-2},\ldots$ and the generator $t$ of the other copy of $\mathbb{Z}$ satisfies $ta_{i}t^{-1}=a_{i+1}$ for each $i$. Note that the group $B$ is generated by the two elements $t$ and $a=a_{0}$. The group $B$ was described independently in \cite{N04} and on page 638 in \cite{B05}, where Brin showed that is an amenable supgroup of Thompson's group $F$. In that paper it is the group generated by $f$ and $h$.

We define the $t$-height of a word over the generating set $\{a,t,a^{-1},t^{-1}\}$ to be the sum of the powers of $t$. Before counting the total number of loops, we will count the number of loops where any initial subword has non-negative height. Let $G(x,y)$ be the generating function for these, where $x$ counts the total length and $y$ counts the number of steps of the loop which end at height 0. For each positive integer $n$, let $H_{n}(x,y)$ be the generating function for $n$-tuples $w_{1},w_{2},\ldots, w_{n}$ of words in $\{a,a^{-1},t,t^{-1}\}^{*}$ which each end at height 0 and which have no $a$ or $a^{-1}$ steps at height 0, such that $\overline{w_{1}\ldots w_{n}}=1$. In this generating function, $x$ counts the total length of $w_{1}\ldots w_{n}$ and $y$ counts the total number of steps which end at height 0. Given such a loop $l$, let the baseloop $l'$ be the subword consisting of all $a$ and $a^{-1}$ steps at $t$-height 0. Similarly to the previous algorithm, we let $c_{i}$ be the number of steps in $l'$ from $a^{i-1}$ to $a^{i}$, and $d_{i}$ be the number of steps in $l'$ from $a^{-i+1}$ to $a^{-i}$. Then the length $|l'|$ of $l'$ is equal to
\[\sum_{i=1}^{m}2c_{i}+\sum_{j=1}^{n}2d_{j}.\]
As in the previous subsection, for a given pair of sequences $c_{1},\ldots,c_{m},d_{1},\ldots,d_{n}$, the number of such base loops is equal to
\begin{equation}\label{bloops}{c_{1}+d_{1}\choose c_{1}}\prod_{i=1}^{m-1}{c_{i}+c_{i+1}-1\choose c_{i}-1}\prod_{j=1}^{n-1}{d_{j}+d_{j+1}-1\choose d_{j}-1}.\end{equation}

Let $l'=a_{1}a_{2}\ldots a_{|l'|}$, where each $a_{i}\in\{a,a^{-1}\}$, and let $l=w_{1}a_{1}w_{2}\ldots a_{|l'|}w_{|l'|+1}$ be the decomposition where each step $a_{i}$ is at $t$-height 0. We say that the $a$-height of one of the subwords $w_{i}$ is equal to the integer $p$ which satisfies $a^{p}=a_{1}\ldots a_{i}$. Then $l$ is a loop if and only if for any height $h$, concatenating all of the words $w_{i}$ at $a$-height $h$ creates a loop. Note that each word $w_{i}$ must have height 0 and have no $a$ or $a^{-1}$ steps at height 0. As in the previous section we define another generating function $\Lambda_{d}(x,y)$ by
\[\Lambda_{d}(x,y)=\sum H_{d_{n}}\prod_{j=1}^{n-1}x^{2d_{j+1}}y^{2d_{j+1}}{d_{j}+d_{j+1}-1\choose d_{j}-1}H_{d_{j}+d_{j+1}}(x),\]
where the sum is over all sequences $n,d_{1},d_{2},\ldots,d_{n}$ with $d_{1}=d$. In the same way as in the previous section we get the following equations, which are essentially the same as \eqref{FinO} and \eqref{Om}.

\begin{align}\label{GinO}G(x,y)=&\sum_{c,d=1}^{\infty}x^{2c+2d}y^{2c+2d}{c+d\choose c}H_{c+d+1}(x,y)\Lambda_{d}(x,y)\Lambda_{c}(x,y)\nonumber\\
+&2\sum_{d=1}^{\infty}x^{2d}y^{2d}H_{d}(x,y)\Lambda_{d}(x,y)\nonumber\\
+&H_{1}(x,y).\end{align}

\begin{equation}\label{Om2}\Lambda_{d}(x)=H_{d}(x,y)+\sum_{b=1}^{\infty}x^{2b}y^{2b}{b+d-1\choose d-1}H_{b+d}(x,y)\Lambda_{b}(x).\end{equation}

So now to calculate $G(x,y)$, we just need to calculate the generating functions $H_{n}(x,y)$. For each $k\in\mathbb{Z}_{\geq0}$, let $J_{k}(x)$ be the generating function for loops in $B$ which have exactly $k$ steps which end at $t$-height 0, none of which are $a$ or $a^{-1}$ steps, and which never go below height 0. For each such word $w$, the number of ways of breaking it into $n$ words $w_{1},w_{2},\ldots,w_{n}$ where each ends at height $0$, such that $w_{1}\ldots w_{n}=w$ is equal to
\[{k+n-1\choose n-1}.\]
Therefore, we can calculate each generating function $H_{n}(x,y)$ in terms of the generating functions $J_{k}(x)$ as follows:
\begin{equation}\label{Heq}H_{n}(x,y)=\sum_{k=0}^{\infty}y^{k}{k+n-1\choose n-1}J_{k}(x).\end{equation}

Finally, we will calculate the generating functions $J_{k}(x)$. Trivially we have $J_{0}(x)=1$. For $k>0$, let $l$ be a loop counted by $J_{k}(x)$. Then $l$ must contain exactly $k$ steps which end at height 0, which are not $a$ or $a^{-1}$ steps. Hence they must all be $t^{-1}$ steps. Therefore, $l$ decomposes as
\[l=tu_{1}t^{-1}tu_{2}t^{-1}\ldots t u_{k}t^{-1},\]
where each word $u_{k}$ ends at height 0 and never goes below height 0. Moreover, since $l$ is a loop, we must have $\overline{u_{1}\ldots u_{k}}=1$. Hence the word $u=u_{1}\ldots u_{k}$ is counted by the generating function $G(x,y)$. Moreover, if $u$ contains $m$ steps which end at height 0, then there are exactly
\[{m+k-1\choose k-1}\]
ways to decompose $u$ into subwords $u_{1},\ldots, u_{k}$ which each end at height 0. Hence we get the equation
\begin{equation}\label{Jeq}J_{k}(x)=\sum_{m=0}^{\infty}x^{2k}{m+k-1\choose k-1}[y^{m}]G(x,y).\end{equation}

Now using equations \eqref{GinO}, \eqref{Om2}, \eqref{Heq} and \eqref{Jeq} as well as the base case $J_{0}(x)=1$, we can calculate the coefficients of $G(x,y)$ in polynomial time using a dynamic program. Finally we need to relate these coefficients to the total number of loops in $B$. We claim that for each $n$, the number of loops $b_{n}$ of length $n$ in $B$ over the generating set $\{a,t,a^{-1},t^{-1}\}$ is equal to
\[b_{n}=\sum_{m=0}^{\infty}\frac{n}{m}[y^{m}][x^{n}]G(x,y).\]
The reason for this is that the contribution to both sides of the equation from any set of $n$ loops which are cyclic permutations of each other is the same. That is, if we take $n$ loops $x_{i}\ldots x_{n}x_{1}\ldots x_{i-1}$ for $1\leq i\leq n$, and $m$ of these are counted by $G(x,y)$, then they will each contribute $x^{n}y^{m}$ to $G(x,y)$, so altogether these will contribute $n$ to both sides of the equation. If two or more of these loops are identical, then we must have $x_{1}\ldots x_{n}=(x_{1}\ldots x_{p})^q$ for some $p,q$ satisfying $pq=n$. In this case, assuming that $q$ is maximal, the contribution to each side is $n/q$ instead of $n$, since we overcounted by a factor of $q$.

Using the last equation we can quickly calculate the coefficients of the cogrowth generating function $C_{B}(x)$ using those of $G(x,y)$. In Table \ref{tab:nb50} we give the first 50 coefficients of this generating function. In fact we have 128 terms.

\scriptsize{
\begin{table}
   \centering
   \begin{tabular}{@{}c @{}} 
      \hline    
     1\\
                                                           4\\
                                                          28\\
                                                         232\\
                                                        2092\\
                                                       19864\\
                                                      195352\\
                                                     1970896\\
                                                    20275692\\
                                                   211825600\\
                                                  2240855128\\
                                                 23952786400\\
                                                258287602744\\
                                               2806152315048\\
                                              30686462795856\\
                                             337490492639512\\
                                            3730522624066540\\
                                           41422293291178872\\
                                          461802091590831904\\
                                         5167329622166765872\\
                                        58012358366319158872\\
                                       653272479274904359312\\
                                      7376993667962247094112\\
                                     83518163933592420945440\\
                                    947797532286760923097848\\
                                  10779770914124700529470264\\
                                 122856228305621394118000520\\
                                1402877847412263986004347872\\
                               16048147989560391552043686160\\
                              183892883412730524613883088808\\
                             2110556326150834244975990231512\\
                            24259510831181186885644198829344\\
                           279244563297679787781517160899820\\
                          3218641495385722409923501191862264\\
                         37146337262307758446419466115479416\\
                        429227600058421313330040967935014416\\
                       4965493663308539362541734301378311648\\
                      57506535582014868288482236767840209688\\
                     666700108804771886996957763509359246064\\
                    7737176908622194648339548498436658811432\\
                   89878279784970230837678375953110478795352\\
                 1045033044367535197025078407316665177933928\\
                12161645115366917947524997117208173413019632\\
               141653302005285175865456465524239660635389712\\
              1651274058730064356309776255817393993665780288\\
             19264448513399180870635082273788105896265150480\\
            224919270246185854430934219198103161122414157760\\
           2627954546552385827255336138747466100454012242528\\
          30726935577139566309665785537931570627782996384120\\
         359517978960007312327796870699755173605904761839752\\
                   \hline    
   \end{tabular}
   \vspace{4mm}
   \caption{The first 50 coefficients of the cogrowth series for the Navas-Brin group $B.$}
   \label{tab:nb50}
\end{table}}
\normalsize

\subsection{A General Algorithm}
Before we describe the algorithm which we use for Thompson's group $F$, we will describe a general algorithm which can be applied to any group admitting certain functions which can be computed very quickly. In the next subsection we will describe how we apply this algorithm to $F$. This algorithm could also be applied to any of the other groups which we have discussed, however it would be much less efficient than the specific algorithms described previously in this section.
 
Our algorithm can be seen as a significantly more memory efficient version of the algorithm in \cite{EFR10}. First we describe that algorithm. Given a loop $\gamma=a_{0}a_{1}\ldots a_{2n}$, where each $a_{i}\in V(\Gamma)$ and $a_{2n}=a_{0}=e$, we define the midpoint of $\gamma$ to be the vertex $a_{n}$. Then $\gamma$ is made up of a walk of length $n$ from $e$ to its midpoint followed by a walk of length $n$ from its midpoint to 1. Hence, the number of loops in $\Gamma$ of length $2n$ with midpoint $m$ is the square of the number of walks of length $n$ from 1 to $m$.

Using a simple dynamic program, the algorithm calculates the number of walks to each vertex in $B(e,n)$, the ball of radius $n$ in $\Gamma$. Then one sums the squares of these numbers to calculate the number of loops of length $2n$. Note also that for each walk from $e$ to $m$, there is a corresponding walk from $e$ to $m^{-1}$, so it is only necessary to calculate the number of walks to either $m$ or $m^{-1}$. The problem with this algorithm is that it is necessary to store a large proportion of the ball of radius $n$ in memory at the same time. As a result it is essentially impossible to get any more than 24 coefficients of the cogrowth series for Thompson's group $F$ using this algorithm. Our algorithm is very similar except that we only store the ball of radius $k$ in memory, where $k\approx n/2$. Importantly, we do this without significantly increasing the running time of the program.

Let $G$ be a group with inverse closed generating set $S$. Let $\Gamma(G,S)$ denote the Cayley graph of $G$ with respect to the generating set $S$. We will often refer to this as simply $\Gamma$. We will assume that every loop has even length, however this algorithm could easily be altered to apply when this is not the case.

Let $O$ be an object in the program which represents an element of $G$. We require the following functions to be implemented:
\begin{itemize}
\item $init()$. This returns an object $O$ which represents the identity in $G$.
\item $val(O)$. This returns a value which is uniquely determined by the element of $G$ which the object $O$ represents. In other words, $val(O_1)=val(O_2)$ if and only if $O_1$ and $O_2$ represent the same element of $G$.
\item For each generator $\lambda\in S$, we have an operation $O.do_\lambda$. If $O$ initially represents the element $g\in G$, this changes $O$ to an object which represents $g\lambda$.
\item For each generator $\lambda\in S$, we have a function $l_{\lambda}(O)$, defined by $l_{\lambda}(O)=|g\lambda|-|g|$, where $g$ is the element of $G$ which $O$ represents. That is, $l_{\lambda}(O)=1$ if applying $\lambda$ moves $g$ away from the identity.
\end{itemize}
The speed of our algorithm depends entirely on the efficiency of these functions. For Thompson's group our implementations of these all take constant time. Importantly, we do not require an inverse of $val$ to be implemented.

Given these functions, the algorithm proceeds as follows:

\textbf{Step 1:} Assign an arbitrary order to the generating set $S$ and set $k=\lceil\frac{n}{2}\rceil$.

\textbf{Step 2:} Using a simple dynamic program, construct an associative array $A_{n-k}$, implemented as a hash table, with a key value pair $(k_{g},a_{g})$ for each element $g\in G$ within the ball of radius $n-k$. The key $k_{g}$ is given by $val(O)$ where $O$ is any object which represents $g$ and the value $a_{g}$ is equal to the number of walks of length $n-k$ in $\Gamma$ from $e$ to $g$. We will write $a_{g}=A[k_{g}]$. For a number $x$ which is not a key in $A_{n-k}$, we set $A[x]=0$.

\textbf{Step 3:} Construct a tree $T_{k}$ which contains one vertex $v_{g}$ for each element $g$ of $G$ within the ball of radius $k$, such that each vertex $v_{g}$, apart from $v_{e}$, is connected to exactly one vertex $v_{h}$ satisfying $|h|=|g|-1$, and $g=h\lambda$ for some $\lambda\in S$. If there are multiple possible choices of $h$, we choose the element $h$ which minimises $\lambda$, according to the order we assigned in step 1. The edge $(h,g)$ is then labelled with $\lambda$. Each vertex $v_{g}$ is also labelled with the number $p(v_{g})$ of paths of length $k$ in $\Gamma$ from $e$ to $g$.

\textbf{Step 4:} We now create a function $numpaths(O,d)$ whose input is an object $O$ and a positive integer $d$, which, assuming that $d=|g|$, outputs the number of paths of length $n$ in $\Gamma$ from $e$ to $g$, where $g$ is the group element represented by $O$. During the calculation of $ numpaths$ the object $O$ may change, but at the end it must represent the same group element $g$. Each path of length $n$ from $e$ to $g^{-1}$ in $\Gamma$ can be written in a unique way as a path of length $k$ from $e$ to some vertex $h$ in $\Gamma$ followed by a path of length $n-k$ from $h$ to $g^{-1}$. For a given $h$, the number of these paths is equal to $p(v_{h})A[k_{h^{-1}g^{-1}}]=p(v_{h})A[k_{gh}]$. Hence, the number which we need to return is
\[\sum_{h\in G}p(v_{h})A[k_{gh}].\]
Note also that the summand is 0 unless $|h|\leq k$ and $|gh|\leq n-k$, so we only need to sum over values of $h$ which satisfy these two inequalities. To do this we perform a depth first search of the tree $T_{k}$, skipping any sections where we can be sure that there are no vertices $v_{h}$ such that $h$ satisfies the two inequalities. We start the search at the root vertex $v_{e}$ of $T_{k}$ and initialise $r=0$ and $total=0$. Whenever we move from a vertex $v_{h}$ to $v_{h\lambda}$ we change $d$ to $d+l_{\lambda}(O)$ and then apply the operation $O.do_{\lambda}$. That way whenever we are at a vertex $v_{h}$, the object $O$ represents $gh$ and $d=|gh|$. We also increase $x$ by 1 whenever we move to a child vertex and decrease $x$ by 1 when we backtrack so that we always have $x=|h|$. Then we add $p(v_{h})A[k_{gh}]=p(v_{h})A[val(O)]$ to the sum $total$ if and only if $d\leq n-k$, since $x=|h|\leq k$ for every vertex $v_{h}$ in $T_{k}$. Since $d$ decreases by at most 1 when we move to a child vertex, and $x$ always increases by 1, the value $x+d$ never decreases when we move to a child vertex. So if $x+d>n$ when we are at a vertex $v_{h}$, then we do not traverse the children of $v_{h}$. At the end of the search we return to the root vertex so that $O$ is back to its original value and then return the value $total$.

\textbf{Step 5:} For the last step we just need to add up the value of $numpaths$ for every vertex $g$ in the ball of radius $n$ such that $|g|$ has the same parity as $n$. To accomplish this we perform a depth first search of the tree $T_{n}$, which is defined in the same way as $T_{k}$. However, we do not explicitly construct $T_{n}$ as doing so would use too much memory. In order to perform the depth first search, we just need a function $isedge_{\lambda}(O)$ for each $\lambda\in S$ which returns 1 if and only if there is an outward edge from $v_{g}$ to $v_{g\lambda}$ in $T_{n}$, where $g$ is the group element that $O$ represents. This will be the case if and only if $|g\lambda|=|g|+1$ and $|g\lambda\mu|=|g\lambda|+1$ for each $\mu\in S$ with $\mu<\lambda^{-1}$. We test this using the functions $l_{\lambda}$, $do_{\lambda}$ and $l_{\mu}$. During the depth first search, we keep track of the distance $d=|g|$, where $g$ is the group element represented by $O.$ Now, to calculate the number $numloops$ of loops of length $2n$, we first set $numloops=0$, then run the depth first search, and when we visit each vertex of $T_{n}$, add $numpaths(O,d)^{2}$ to $numloops$. At the end of this process $numloops$ is equal to the number of loops of length $2n$, so we return $numloops$ and terminate the algorithm.

The advantage of this algorithm is that it only stores $T_{k}$ and $A_{n-k}$ in memory, rather than all of $T_{n}$. This also allows us to parallelise step 5.

\subsection{Thompson's group $F$}
In this section we describe how the object $O$, the operation $do_{\lambda}$ and the functions $val$ and $l_{\lambda}$ are implemented for Thompson's group $F$. We use the standard generating set $S=\{a,b,a^{-1},b^{-1}\}$, which yields the presentation
\[F=\langle a,b|a^{2}ba^{-2}=baba^{-1}b^{-1},a^{3}ba^{-3}=ba^{2}ba^{-2}b^{-1}\rangle.\]
For $O$ we use the forest representation given by Belk and Brown in \cite{BB05}. We simultaneously store the forest diagram as a graph $P$ as well as a pair of binary strings $a,b$. A forest diagram is defined as a pair of sequences of binary trees, with one tree highlighted in each sequence. A single binary tree with $m$ leaves corresponds to a unique binary string $s$ of length $2m-2$ with the property that $s$ has an equal number of $1$'s and $0$'s and the number of $1$'s in any initial substring is at least equal to the number of $0$'s in that substring. This is defined by doing a depth first search of the tree and writing a 1 whenever we move down an edge from a vertex to its left subtree and writing a 0 whenever we backtrack along such an edge. Now to convert a sequence of binary trees to a binary string, we first convert each individual tree to a binary string, insert the string 01 before each such string, then concatenate the results. We then change the 01 before the string corresponding to the highlighted tree to 00. This is how the strings $a$ and $b$ are defined. We also store the numbers $p_{a}$ and $p_{b}$ in $O$, which define the positions of the 00 before the highlighted tree in each of $a$ and $b$. The strings $a$ and $b$ each have length at most $2n$, so they can be represented as 64 bit numbers as long as $n\leq32$. The operation $do_{\lambda}$ is defined easily for the effect on the graph $P$. The effect on the binary strings $a$ and $b$ is a bit more complicated and requires some bit shifting. The entire length of an element of Thompson's group $F$ can be determined by its forest diagram, as shown in \cite{BB05}, so we could use this to determine $l_{\lambda}$ by using the graph $P$ and simply subtracting the calculated length $|g|$ from the length we calculate for $|g\lambda|$. In fact we do it more efficiently than this, as the difference $|g\lambda|-|g|$ is determined entirely by the highlighted tree and the surrounding trees. Finally, $val(O)$ simply returns the pair $(a,b)$.

In Table \ref{tab:thomp} we give the first 32 coefficients of the cogrowth generating function for Thompson's group $F.$ This is 7 further terms than given in \cite{HHR15}.

\begin{table}[!ht]
   \centering
   \begin{tabular}{@{}c @{}} 
      \hline    
    Coefficients\\
\hline
1\\
4\\
28\\
232\\
2092\\
19884\\
196096\\
 1988452\\
 20612364\\
 217561120\\
2331456068\\
25311956784\\
277937245744\\
3082543843552\\
 34493827011868\\
 389093033592912\\
 4420986174041164\\
 50566377945667804\\
581894842848487960\\
6733830314028209908\\
78331435477025276852\\
 915607264080561034564\\
 10750847942401254987096\\
 126768974481834814357308\\
 1500753741925909645997904\\
 17833339046478612301547884\\
 212663448005862463186139032\\
 2544535423071442709522261116\\
30542557512715560857221200908\\
367718694478039302564802454628\\
4439941127401928226610731571976\\
53756708216952135677787623701460\\
      \hline    
   \end{tabular}
   \vspace{4mm}
   \caption{Terms in the cogrowth sequence of Thompson's group $F$.}
   \label{tab:thomp}
\end{table}

\section{Possible cogrowth of Thompson's Group}\label{sec:thom}
In this section we will show that if $a_{0},a_{1},\ldots$ is the cogrowth sequence for Thompson's group $F$, then for any real numbers $a<1$ and $\lambda>1$, the inequality
\[a_n<16^n\lambda^{-n^a}\]
holds for all sufficiently large integers $n$. As a result, if Thompson's group is amenable, then the sequence cannot grow at the rate \[16^n\lambda^{-n^a},\]
For any fixed $a<1$. This result follows quite readily from results in \cite{PS-C02} and \cite{PS-C00}, however we will need some definitions before we can see how they apply. Let $G$ be a group with finite generating set $S$. Then we define the function $\phi_{S}:\mathbb{Z}_{>0}\to\mathbb{R}_{>0}$ by setting $\phi_{S}(n)$ to be the probability that a random walk in $(G,S)$ of length $2n$ finishes at the origin. In other words, $|S|^{2n}\phi_{S}(n)$ is the number of loops of length $2n$ in the Cayley graph $\Gamma(G,S)$.

Now, for two different (non-increasing) functions $\phi_{1}$ and $\phi_{2}$, we say that $\phi_{1}\preceq\phi_{2}$, if there is some $C\in\mathbb{R}_{>0}$ such that
$\phi_{1}(n)\leq C\phi_{2}(n/C)$,
where each $\phi_{i}$ is extended to the reals by linear interpolation. Finally we say that $\phi_{1}\approx\phi_{2}$ if both $\phi_{1}\preceq\phi_{2}$ and $\phi_{2}\preceq\phi_{1}$. We recall Theorem 3.1 from \cite{PS-C00}:
\begin{thm} Let $G$ be a group with finite, symmetric generating set $S$ and let $H$ be a subgroup of $G$ and let $T$ be a finite symmetric generating set of $H$. Then
\[\phi_{S}\preceq\phi_{T}.\]\end{thm}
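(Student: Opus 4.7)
The plan is to use a Dirichlet-form comparison driven by the observation that each $t \in T$ is a bounded-length word in $S$. Let $L = \max_{t \in T}|t|_S$, which is finite since $S$ generates $G \supseteq H$. For each $t \in T$ fix a word $w_t = s_1^{(t)} \cdots s_{L_t}^{(t)}$ of length $L_t \leq L$ spelling $t$ in $S$. Write $\mu$ and $\nu$ for the uniform probability measures on $S$ and $T$, viewed as symmetric probability measures on $G$, so that $\phi_S(n) = \mu^{*2n}(e)$ and $\phi_T(n) = \nu^{*2n}(e)$ (the $\nu$-walk remaining inside $H$).

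The first step is to establish a Dirichlet form inequality. For any finitely supported $f : G \to \mathbb{R}$, the telescoping identity along the word $w_t$ combined with Cauchy--Schwarz gives
\[
|f(gt) - f(g)|^{2} \;\leq\; L_t \sum_{k=1}^{L_t}\bigl|f\bigl(g\,s_1^{(t)}\cdots s_k^{(t)}\bigr) - f\bigl(g\,s_1^{(t)}\cdots s_{k-1}^{(t)}\bigr)\bigr|^{2}.
\]
Summing over $g \in G$ and $t \in T$, re-indexing each inner term by a left-translation, and counting how often each individual edge of the Cayley graph $\Gamma(G,S)$ appears across all paths $\{w_t\}_{t\in T}$, an elementary double-counting yields a constant $C = C(L,|S|,|T|)$ with
\[
\mathcal{E}_\nu(f,f) \;\leq\; C\,\mathcal{E}_\mu(f,f) \qquad \text{for every finitely supported } f:G\to\mathbb{R},
\]
where $\mathcal{E}_\eta(f,f) = \tfrac12\sum_{g,h\in G}|f(g)-f(h)|^{2}\eta(g^{-1}h)$.

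The second step is to transfer this Dirichlet-form comparison to a return-probability comparison via the Coulhon--Saloff-Coste machinery, which identifies $\phi_\eta$, up to the $\asymp$-equivalence, with a functional of the Folner (equivalently, Nash) profile of the group. Applied to functions supported on $H$, on which $\mathcal{E}_\nu(f,f)$ coincides with the natural Dirichlet form of the $\nu$-walk on $H$, the inequality $\mathcal{E}_\nu \leq C\,\mathcal{E}_\mu$ forces the Nash profile of $(H,\nu)$ to dominate that of $(G,\mu)$, which in turn gives $\phi_S \preceq \phi_T$ (since a larger Nash profile means smaller Dirichlet form, slower mixing, and hence larger return probability). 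The main obstacle is this last translation: $\nu$ is not adapted on $G$ (its support generates only $H$), so the spectral/Nash route from Dirichlet forms to return probabilities must be run inside $H$ rather than $G$, requiring careful bookkeeping with left $H$-cosets and an equivariant restriction/extension of functions; verifying that the elementary Dirichlet form inequality yields precisely the $\preceq$ relation (rather than merely a comparison of spectral gaps) uses the full strength of the Coulhon--Saloff-Coste characterisation. Parity issues arising from the words $w_t$ having possibly differing length parities are absorbed into $\preceq$ by first passing to the lazy walks $\tfrac12(\mu+\delta_e)$ and $\tfrac12(\nu+\delta_e)$. This is the analytic content worked out in \cite{PS-C00}.
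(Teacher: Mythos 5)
First, a clarification: the paper does \emph{not} prove this statement. It is introduced with ``We recall Theorem 3.1 from \cite{PS-C00}'' and given without proof, so there is no internal proof to compare your attempt against; I will assess your sketch against the actual PS-C00 argument.

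Your first step --- fix a word $w_t$ in $S$ for each $t\in T$, telescope, Cauchy--Schwarz, reindex and double-count to obtain $\mathcal{E}_\nu(f,f)\leq C\,\mathcal{E}_\mu(f,f)$ for all finitely supported $f:G\to\mathbb{R}$ --- is correct, and this is indeed the first step of the Pittet--Saloff-Coste proof. The same is true of the key observation that $\phi_T(n)=\nu^{*2n}(e)$ because the $\nu$-walk never leaves $H$.

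The transfer step, however, has a genuine gap. Restricting the Dirichlet-form inequality to functions $f$ supported on $H$ gives $\mathcal{E}_\nu^H(f,f)\leq C\,\mathcal{E}_\mu^G(\tilde f,\tilde f)$, where $\tilde f$ is the zero-extension, and taking infima over $|\operatorname{supp} f|\leq v$ compares $\Lambda_\nu^H$ only with the \emph{restricted} quantity $\inf\{\mathcal{E}_\mu^G(\tilde f)/\|\tilde f\|_2^2: f\ \text{on}\ H,\ |\operatorname{supp} f|\leq v\}$, not with the full Faber--Krahn/Nash profile $\Lambda_\mu^G$. The restricted profile can be drastically larger than $\Lambda_\mu^G$: take $G=\mathbb{Z}^2$, $H=\mathbb{Z}\times\{0\}$; for $f$ supported on $H$ the vertical differences in $\mathcal{E}_\mu^G(\tilde f)$ contribute $\gtrsim\|f\|_2^2$, so the restricted profile is bounded away from $0$, whereas $\Lambda_\mu^G(v)\asymp v^{-1}\to 0$. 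So the Nash-profile route you describe does not produce the inequality you need, and there is also a sign slip in your parenthetical (a larger Faber--Krahn profile means a \emph{larger} Dirichlet form relative to $\ell^2$, hence \emph{faster} decay of the return probability, not slower).

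What PS-C00 actually uses is a direct comparison lemma that never passes through Fölner or Nash profiles and never restricts to $H$: if $\mu_1,\mu_2$ are symmetric probability measures on the \emph{same} group $G$ with $\mathcal{E}_{\mu_1}\leq C\,\mathcal{E}_{\mu_2}$ (after a laziness adjustment to keep spectra in $[0,1]$), then $\phi_{\mu_2}\preceq\phi_{\mu_1}$. This is proved by a spectral/functional-calculus argument for the associated self-adjoint operators on $\ell^2(G)$, essentially a monotonicity statement for the diagonal of the (continuous-time) heat semigroup under quadratic-form domination. Applying that lemma with $\mu_1=\nu$ and $\mu_2=\mu$ gives $\phi_\mu^G\preceq\phi_\nu^G$, and then $\phi_\nu^G=\phi_T$ finishes the proof. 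So your skeleton (Dirichlet-form comparison, then transfer, then identify $\phi_\nu^G$ with $\phi_T$) is right, but the transfer mechanism should be the spectral comparison lemma on $\ell^2(G)$, not a Coulhon--Saloff-Coste profile argument localised to $H$.
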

The other result we need concerns wreath products with $\mathbb{Z}$. In \cite{PS-C02}, Pittet and Saloff-Coste show (in a remark just below Theorem 8.11) that for a finite generating set $T$ of $\mathbb{Z}\wr_{d}\mathbb{Z}$, we have
\[\phi_{T}(n)\approx\exp\left(-n^{d/(d+2)}(\log n)^{2/(d+2)}\right).\]
 Now, since $\mathbb{Z}\wr_{d}\mathbb{Z}$ is a subgroup of Thompson's group $F$, we must have
\[\phi_{S}(n)\preceq\phi_{T}(n)\approx \exp\left(-n^{d/(d+2)}(\log n)^{2/(d+2)}\right),\]
where $S$ is the standard generating set of $F$. Hence, for any positive integer $d$, there is a positive real number $C$ such that
\[\phi_{S}(n)\leq C\exp\left(-(n/C)^{d/(d+2)}(\log (n/C))^{2/(d+2)}\right).\]
  
Now we are ready to prove our theorem. 
\begin{thm}\label{thm:3.2}
Let $a_{n}$ be the number of loops of length $2n$ in the standard Cayley graph for Thompson's group. Then for any real numbers $a<1$ and $\lambda>1$, the inequality
\[a_n<16^n\lambda^{-n^a}\]
holds for all sufficiently large integers $n$.\end{thm}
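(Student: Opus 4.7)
The plan is to re-express the bound in terms of the return probability $\phi_S(n) = a_n / |S|^{2n} = a_n/16^n$, so that the inequality $a_n < 16^n \lambda^{-n^a}$ becomes the equivalent statement $\phi_S(n) < e^{-n^a \log \lambda}$ for all sufficiently large $n$. The whole task then reduces to producing a sufficiently strong upper bound on $\phi_S$.

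I would combine the two results cited just before the theorem directly. Given the fixed $a < 1$ and $\lambda > 1$, first choose a positive integer $d$ with $d/(d+2) > a$; this is possible because $d/(d+2) \to 1$ as $d \to \infty$. Fix any finite symmetric generating set $T$ of $\mathbb{Z} \wr_d \mathbb{Z}$. Because $\mathbb{Z} \wr_d \mathbb{Z}$ embeds in Thompson's group $F$, Theorem 3.1 of \cite{PS-C00} gives $\phi_S \preceq \phi_T$, while the Pittet--Saloff-Coste estimate from \cite{PS-C02} gives $\phi_T(n) \approx \exp\bigl(-n^{d/(d+2)}(\log n)^{2/(d+2)}\bigr)$. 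Unwinding the definition of $\preceq$, these combine into a constant $C = C(d) > 0$ such that
$$\phi_S(n) \;\leq\; C \exp\Bigl(-(n/C)^{d/(d+2)}\bigl(\log(n/C)\bigr)^{2/(d+2)}\Bigr)$$
for every $n \geq 1$, which is the key estimate already recorded in the paragraph preceding the theorem.

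To finish I would compare the two exponents. Setting $b = d/(d+2) > a$, the right-hand exponent behaves like $C^{-b}\, n^b (\log n)^{2/(d+2)}(1+o(1))$ as $n \to \infty$, whereas the target exponent is just $n^a \log \lambda$. Since $b > a$, the former dominates the latter (and also absorbs the harmless additive constant $\log C$ coming from the prefactor $C$ on the right), so
$$(n/C)^b \bigl(\log(n/C)\bigr)^{2/(d+2)} \;\geq\; n^a \log \lambda + \log C$$
for all sufficiently large $n$, which gives $\phi_S(n) \leq e^{-n^a \log \lambda}$, i.e.\ $a_n < 16^n \lambda^{-n^a}$, as required.

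I do not anticipate any serious obstacle: the argument is essentially a clean unpacking of the two cited theorems. The only subtlety is the order of quantifiers --- one must first fix $a$ and $\lambda$, then choose $d$ large enough that $d/(d+2) > a$, then extract the constant $C$ from the $\preceq$ relation, and finally take $n$ sufficiently large. Once this order is respected, the final asymptotic comparison between $n^b (\log n)^{2/(d+2)}$ and $n^a \log \lambda$ is routine.
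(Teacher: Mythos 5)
Your proof is correct and follows essentially the same route as the paper: fix $a$ and $\lambda$, pick $d$ with $d/(d+2)>a$, invoke the Pittet--Saloff-Coste subgroup comparison $\phi_S \preceq \phi_T$ together with their asymptotic for $\mathbb{Z}\wr_d\mathbb{Z}$, and finish by comparing exponents. If anything, your write-up is a touch more careful than the paper's (which drops the logarithmic factor early and introduces an undefined $\alpha$ in its final display), since you carry $\log\lambda$ and the additive constant $\log C$ explicitly through the last comparison.
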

\begin{proof}Let $d$ be a positive integer such that $\frac{d}{d+2}>a$. Then there is some $C\in\mathbb{R}_{>0}$ such that
\[\phi_{S}(n)\leq C\exp\left(-(n/C)^{d/(d+2)}\log (n/C)^{2/(d+2)}\right)\]
for all $n\in\mathbb{Z}_{>0}$. For $n$ sufficiently large, we have $\log (n/C)>0$, so
\begin{align*}C\exp\left(-(n/C)^{d/(d+2)}\log (n/C)^{2/(d+2)}\right)<&C\exp\left(-(n/C)^{d/(d+2)}\right)\\
=&\exp\left(\log(C)-C^{-d/(d+2)}n^{d/(d+2)}\right),\end{align*}
Hence, for all $n$ sufficiently large we have
\[\phi_{S}(n)<\exp(-n^{\alpha}).\]
Therefore,
\[a_{n}=16^{n}\phi_{S}(n)<16^{n}\exp(-n^{\alpha})\]\end{proof}

Note that the same result holds if we replace Thompson's group $F$ with the Navas-Brin group $B$, since it also contains every wreath product $\mathbb{Z}\wr_{d}\mathbb{Z}$ as a subgroup.

\section{Moments}\label{sec:moments}

In \cite{HHR15}, Haagerup, Haagerup and Ramirez-Solano prove that the cogrowth sequence $a_{0},a_{1},\ldots$ for Thomson's group $F$ is the sequence of moments of some probability measure $\mu$ on $[0,\infty)$, in other words, the sequence is a Stieltjes moment sequence. In fact, their proof applies to the cogrowth series of any (locally finite) Cayley graph $\Gamma$. In this section, we generalise the result further, to any locally finite graph. First we  give some background on the Stieltjes and Hamburger moment problems.

\subsection{Stieltjes and Hamburger moment sequences}

In the following, for the sequence $\textit{\textbf{a}}=a_{0},a_{1},\ldots$, and $n\geq0$, we define the matrix $H_{\infty}^{(n)}(\textit{\textbf{a}})$ by
 \[H_{\infty}^{(n)}(\textit{\textbf{a}})=\begin{bmatrix}
    a_{n} & a_{n+1} & a_{n+2} & \dots \\
    a_{n+1} & a_{n+2} & a_{n+3} & \dots \\
    a_{n+2} & a_{n+3} & a_{n+4} & \dots \\
    \vdots & \vdots & \vdots & \ddots
\end{bmatrix}\]

\begin{thm}(Stieltjes \cite{S1894}, Gantmakher–Krein \cite{GK37} ) For a sequence $\textbf{a}=a_{0},a_{1},\ldots$, the following are equivalent:
\begin{itemize}
\item There exists a positive measure $\mu$ on $[0,\infty)$ such that 
\[a_{n}=\int x^{n}d\mu(x).\]
\item The matrices $H_{\infty}^{(0)}(\textbf{a})$ and $H_{\infty}^{(1)}(\textbf{a})$ are both positive semidefinite.
\item There exists a sequence of real numbers $\alpha_{0},\alpha_{1},\ldots\geq0$ such that the gnerating function $A(t)$ for the sequence $a_{0},a_{1},\ldots$ satisfies
\[A(t)=\sum_{n=0}^{\infty}a_{n}t^{n}=\frac{\alpha_{0}}{\displaystyle1-\frac{\alpha_{1}t}{\displaystyle1-\frac{\alpha_{2}t}{1-\ldots}}}\]

\end{itemize}
\end{thm}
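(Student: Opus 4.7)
The plan is to establish the three-way equivalence cyclically, $(1)\Rightarrow(2)\Rightarrow(3)\Rightarrow(1)$. The direction $(1)\Rightarrow(2)$ is immediate: for any finite real vector $(c_{0},\ldots,c_{N})$, setting $P(x)=\sum_{i=0}^{N} c_{i}x^{i}$ the two associated quadratic forms evaluate to
\[\sum_{i,j=0}^{N} c_{i}c_{j}a_{i+j} = \int P(x)^{2}\, d\mu(x) \geq 0 \qquad\text{and}\qquad \sum_{i,j=0}^{N} c_{i}c_{j}a_{i+j+1} = \int x P(x)^{2}\, d\mu(x) \geq 0,\]
the second inequality using that $\mu$ is supported on $[0,\infty)$. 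Hence every finite principal submatrix of $H_{\infty}^{(0)}(\mathbf{a})$ and of $H_{\infty}^{(1)}(\mathbf{a})$ is positive semidefinite.

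For $(2)\Rightarrow(3)$, my approach is via the theory of orthogonal polynomials. Define a linear functional $L$ on $\mathbb{R}[x]$ by $L(x^{n})=a_{n}$; the two positive-semidefiniteness hypotheses say precisely that $L(P^{2})\geq 0$ and $L(xP^{2})\geq 0$ for every polynomial $P$. Applying Gram-Schmidt to $1,x,x^{2},\ldots$ with respect to the semidefinite bilinear form $(P,Q)\mapsto L(PQ)$ produces monic orthogonal polynomials $p_{n}$ satisfying a three-term recurrence $xp_{n}=p_{n+1}+b_{n}p_{n}+\lambda_{n}p_{n-1}$ with $\lambda_{n}=L(p_{n}^{2})/L(p_{n-1}^{2})\geq 0$, from which one reads off the Jacobi continued fraction ($J$-fraction) representation of $A(t)$. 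To convert this into the claimed Stieltjes continued fraction with $\alpha_{k}\geq 0$, I invoke the classical contraction formulas expressing $\alpha_{2k-1}$ and $\alpha_{2k}$ as ratios of Hankel determinants $\Delta_{n}^{(\varepsilon)}=\det H_{n}^{(\varepsilon)}(\mathbf{a})$ for $\varepsilon\in\{0,1\}$; positive semidefiniteness of both Hankel matrices makes all such ratios nonnegative. Degenerate cases, in which some $\Delta_{n}^{(\varepsilon)}$ vanishes, correspond to a terminating continued fraction (equivalently, a measure concentrated on finitely many atoms) and are handled by stopping the construction at that level.

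For $(3)\Rightarrow(1)$, I would truncate the continued fraction after $N$ levels to obtain a rational approximant $A_{N}(t)=P_{N}(t)/Q_{N}(t)$. Standard continued-fraction theory identifies $Q_{N}$ (up to a variable change) with a Gauss-quadrature-type orthogonal polynomial whose zeros are real and nonnegative, yielding a partial-fraction decomposition $A_{N}(t)=\sum_{i} w_{N,i}/(1-x_{N,i}t)$ with $w_{N,i},x_{N,i}\geq 0$. The discrete measure $\mu_{N}=\sum_{i}w_{N,i}\delta_{x_{N,i}}$ has total mass $a_{0}$ and reproduces the first $2N$ moments of $\mathbf{a}$; uniform boundedness of total mass together with Helly's selection theorem then extracts a weak-$*$ subsequential limit $\mu$, a positive measure on $[0,\infty)$ whose moments are exactly the $a_{n}$. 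The principal obstacle is $(2)\Rightarrow(3)$: one must verify precisely that nonnegativity of \emph{both} Hankel matrices (and not just of $H_{\infty}^{(0)}$, which alone yields only the weaker Hamburger/$J$-fraction result) translates exactly into nonnegativity of the $S$-fraction coefficients, and one must treat carefully the degenerate cases where Hankel determinants vanish and the defining ratios for the $\alpha_{k}$ are a priori indeterminate.
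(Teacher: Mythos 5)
The paper does not prove this statement at all: it is presented as a classical result and attributed to Stieltjes and Gantmakher--Krein, with the proof left to the references. Your cyclic argument $(1)\Rightarrow(2)\Rightarrow(3)\Rightarrow(1)$ is therefore not a ``different route from the paper'' so much as a filling-in of what the paper deliberately cites as known. That said, your outline is essentially the standard textbook treatment and is sound in its broad strokes: $(1)\Rightarrow(2)$ via the quadratic forms $\int P^2\,d\mu$ and $\int xP^2\,d\mu$ is exactly right; $(2)\Rightarrow(3)$ via orthogonal polynomials, the three-term recurrence, and the Hankel-determinant expressions for the $\alpha_k$ is the classical route; and $(3)\Rightarrow(1)$ via truncation, partial fractions of the convergents, and a Helly selection argument is likewise standard.

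Two small points are worth tightening. First, the terminology in $(2)\Rightarrow(3)$ is slightly off: passing from a $J$-fraction to an $S$-fraction is an \emph{extension}, not a \emph{contraction} (contraction goes the other way), and the genuine content is that positivity of \emph{both} Hankel kernels guarantees the factorizations $b_n=\alpha_{2n}+\alpha_{2n+1}$ and $\lambda_n=\alpha_{2n-1}\alpha_{2n}$ admit nonnegative solutions; you do flag that this and the degenerate (vanishing-determinant) case are the crux, which is appropriate. Second, in $(3)\Rightarrow(1)$ the passage from weak-$*$ convergence of the discrete measures $\mu_N$ to convergence of \emph{moments} is not automatic, since $x^k$ is unbounded; one needs a tail estimate, e.g. $\int_R^\infty x^k\,d\mu_N \le R^{-(k+2)}\,a_{2k+2}$ by Markov's inequality once $N$ is large enough that $\mu_N$ matches the $(2k+2)$-nd moment, followed by $R\to\infty$. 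Including that line would make the existence half airtight. With those refinements the proof is correct; it simply does substantially more work than the paper, which relegates the result to \cite{S1894} and \cite{GK37}.
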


A sequence which satisfies the conditions of the theorem above is called a Stieltjes moment sequence.

\begin{thm} For a sequence $\textbf{a}=a_{0},a_{1},\ldots$, the following are equivalent:
\begin{itemize}
\item There exists a positive measure $\mu$ on $(-\infty,\infty)$ such that 
\[a_{n}=\int x^{n}d\mu(x).\]
\item The matrix $H_{\infty}^{(0)}(\textbf{a})$ is positive semidefinite.

\end{itemize}
\end{thm}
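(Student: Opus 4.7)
The plan is to prove the two implications separately; this is the classical Hamburger moment theorem, so I would follow a standard route. The forward direction is the easy one. Assuming $a_n = \int x^n\,d\mu(x)$, for any finitely supported real vector $(c_0,c_1,\ldots)$ I would compute
\[\sum_{i,j\geq 0} c_i c_j a_{i+j} = \int \Bigl(\sum_{i\geq 0} c_i x^i\Bigr)^{\!2} d\mu(x) \geq 0,\]
and observe that this is precisely the statement that every finite principal submatrix of $H_\infty^{(0)}(\textbf{a})$ is positive semidefinite, which is the natural meaning of positive semidefiniteness for the infinite matrix.

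For the reverse direction I would use the Gelfand--Naimark--Segal style construction. On the vector space $\mathbb{R}[x]$ of polynomials, define a symmetric bilinear form by $B(x^i,x^j)=a_{i+j}$ and extend bilinearly; the PSD hypothesis says $B(p,p)\geq 0$ for every polynomial $p$. I would quotient by the null space $N=\{p:B(p,p)=0\}$ (using Cauchy--Schwarz to verify that $N$ is a subspace) to get a genuine inner product space $V$, and complete to obtain a real Hilbert space $\mathcal{H}$. The multiplication-by-$x$ map $M:p\mapsto xp$ is well-defined on $V$ and symmetric with respect to $B$, since $B(xp,q)=B(p,xq)$ follows directly from the definition. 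Then I would invoke the spectral theorem for a self-adjoint extension $\widehat{M}$ of $M$ on $\mathcal{H}$, writing $\widehat{M}=\int \lambda\,dE(\lambda)$, and define $\mu(A)=\langle E(A)\mathbf{1},\mathbf{1}\rangle$ where $\mathbf{1}$ denotes the class of the constant polynomial $1$. Then
\[a_n = B(x^n,1) = \langle M^n\mathbf{1},\mathbf{1}\rangle = \langle \widehat{M}^n\mathbf{1},\mathbf{1}\rangle = \int \lambda^n\,d\mu(\lambda),\]
as required.

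The main obstacle is producing the self-adjoint extension $\widehat{M}$, since $M$ is a priori only symmetric and need not be essentially self-adjoint (this is exactly the indeterminacy phenomenon of the Hamburger problem). The key point I would use is that $M$ commutes with complex conjugation on $\mathbb{R}[x]$, so by von Neumann's theorem its deficiency indices are equal and a self-adjoint extension exists; existence (not uniqueness) is all the theorem claims. As an alternative route that avoids unbounded operator theory, one can argue via Helly's selection theorem: for each $N$, apply the finite Hamburger/Hausdorff result to the truncated Hankel matrix $[a_{i+j}]_{0\leq i,j\leq N}$ to obtain a finitely supported atomic measure $\mu_N$ matching $a_0,\ldots,a_{2N}$, then pass to a vaguely convergent subsequence and verify that the limit measure has the correct moments; this requires a compactness/tightness argument that I would carry out by exploiting the growth control supplied by the Hankel positivity itself. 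Either route yields the claimed equivalence.
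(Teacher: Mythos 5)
The paper does not prove this theorem: it is stated as the classical solution to the Hamburger moment problem and is used without proof (only the Stieltjes version is decorated with citations). So there is no ``paper's own proof'' to compare against, and the real question is whether your argument is a sound proof of the classical result.

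It is. The forward direction is the standard positivity computation and is fine. The reverse direction via the GNS-style construction is the canonical operator-theoretic proof; you correctly identify the one genuine subtlety, namely that multiplication by $x$ is only symmetric, not essentially self-adjoint in general (this is exactly where indeterminacy enters), and you correctly observe that the von Neumann conjugation criterion supplies equal deficiency indices and hence at least one self-adjoint extension, which is all that existence of $\mu$ requires. One small point worth tightening: the deficiency-index argument and the spectral theorem live on a \emph{complex} Hilbert space, so you should complexify (work with $\mathbb{C}[x]$ and the sesquilinear form $\langle p,q\rangle=\sum_{i,j}a_{i+j}c_i\overline{d_j}$) before invoking them; the conjugation $J$ that fixes $\mathbb{R}[x]$ then commutes with $M$, and the rest goes through as you say. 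Your alternative Helly route is also a standard proof and is correctly sketched, including the acknowledgement that tightness/uniform integrability of $\{x^n\,d\mu_N\}$ must be extracted from the bounded higher moments $a_{2n+2}$ before you can pass the moment identities to the limit. Either route is a complete and correct proof of the statement; the paper simply takes the theorem as known.
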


A sequence which satisfies the conditions of the theorem above is called a Hamburger moment sequence. From either definition of Hamburger moment sequence, it follows immediately that any Stieltjes moment sequence is a Hamburger moment sequence. Carleman's condition states that the measure $\mu$ is unique if
\[\sum_{n=0}^{\infty}a_{2n}^{-\frac{1}{2n}}=+\infty,\]
This is certainly true when the sequence grows at most exponentially, as is the case for all of our examples. For Stieltjes moment sequences, the following weaker condition implies that the measure $\mu$ is unique:
\[\sum_{n=0}^{\infty}a_{n}^{-\frac{1}{2n}}=+\infty.\]
 For a Hamburger moment sequence $\textit{\textbf{a}}$, which grows at most exponentially, the radius of convergence of $A(t)=a_{0}+a_{1}t+a_{2}t^{2}+\ldots$ is equal to
\[\frac{1}{|\mu|}.\]
In particular, this means that if $\textit{\textbf{a}}$ is a Stieltjes moment sequence, the exponential growth rate of the sequence is equal to the minimum value in the support of $\mu$. 

One benefit of proving that  a sequence $\textit{\textbf{a}}$ is a Stieltjes moment sequence is that it allows us to compute good lower bounds for the exponential growth rate of the sequence using only finitely many terms. This method was described in \cite{HHR15}, but we repeat the description here, using the continued fraction form of $\textit{\textbf{a}}$. We consider the generating function
\[A(t)=\sum_{n=0}^{\infty}a_{n}t^{n}=\frac{\alpha_{0}}{\displaystyle1-\frac{\alpha_{1}t}{\displaystyle1-\frac{\alpha_{2}t}{1-\ldots}}}.\]
Using the terms $a_{0},\ldots,a_{n}$, we calculate the terms $\alpha_{0},\ldots,\alpha_{n}$. It is easy to see that $A(t)$ is nondecreasing in each $\alpha_{j}$. Hence, the minimum possible value $A_{n}(t)$ is achieved by setting $\alpha_{n+1},\alpha_{n+2},\ldots$ to 0. Therefore, the radius of convergence $t_{c}$ of $A(t)$ is bounded above by the radius of convergence $t_{n}$ of $A_{n}(t)$. Therefore, $b_{n}=1/t_{n}$ is a lower bound for the exponential growth rate of $\textit{\textbf{a}}$. It is easy to check that the sequence $b_{1},b_{2},\ldots$ is nondecreasing, and $b_{n}^{n}>a_{n}/a_{0}$. It follows that this sequence of lower bounds converges to the exponential growth rate of $\textit{\textbf{a}}$.

If we assume further that the sequences $\alpha_{0},\alpha_{2},\alpha_{4},\ldots$ and $\alpha_{1},\alpha_{3},\ldots$ are non-decreasing, as seems to be true for many of the cases we consider, we can get stronger lower bounds for the growth rate by setting $\alpha_{n+1},\alpha_{n+3},\ldots$ to $\alpha_{n-1}$ and $\alpha_{n+2},\alpha_{n+4},\ldots$ to $\alpha_{n}$. For this sequence the exponential growth rate of corresponding sequence $\textit{\textbf{a}}$ is $(\sqrt{\alpha_{n}}+\sqrt{\alpha_{n-1}})^{2}$.

\subsection{Applications of moments to the cogrowth series}

Here we  describe how to compute lower bounds for the growth rate of the cogrowth sequence of Thompson's group $F.$

\begin{thm}Let $\Gamma$ be a locally finite graph with a fixed base vertex $v_{0}$. For each $n\in\mathbb{Z}_{\geq0}$, let $t_{n}$ be the number of loops of length $n$ in $\Gamma$ which start and end at $v_{0}$. Then there exists a probability measure $\mu$ on $\mathbb{R}$ such that for each $n\in\mathbb{Z}_{\geq0}$, the $n$th moment of $\mu$ is given by
\[\int_{-\infty}^{\infty}x^{n}d\mu=t_{n}.\]
In other words, $t_{0},t_{1},\ldots$ is a Hamburger moment sequence.
\end{thm}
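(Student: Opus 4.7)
The plan is to verify the Hankel positive-semidefiniteness criterion of the Hamburger theorem stated in the previous subsection, and then observe that the zeroth moment equals $1$ in order to upgrade the resulting positive measure to a probability measure. I want to do this purely combinatorially, avoiding the spectral theorem for (possibly unbounded) symmetric operators, since $\Gamma$ is only assumed locally finite and may have unbounded vertex degrees.

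First I would introduce the notation $p_k(u,v)$ for the number of walks of length $k$ from $u$ to $v$ in $\Gamma$, so that $t_n = p_n(v_0,v_0)$. The core identity is the midpoint decomposition: a loop of length $i+j$ at $v_0$ splits uniquely at step $i$ into a walk from $v_0$ to an intermediate vertex $w$ followed by a walk from $w$ back to $v_0$. Because the graph is undirected, reversing the second factor gives $p_j(w,v_0)=p_j(v_0,w)$, hence
\[
t_{i+j} \;=\; \sum_{w\in V(\Gamma)} p_i(v_0,w)\,p_j(v_0,w).
\]
For each fixed pair $(i,j)$ only finitely many vertices $w$ contribute, namely those within graph distance $\min(i,j)$ of $v_0$, a finite set by local finiteness, so the sum is well-defined.

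Second I would use this factorisation to show that the Hankel matrix $H_\infty^{(0)}(\textit{\textbf{t}})$ is positive semidefinite. For any finitely supported real vector $(c_0,\ldots,c_N)$,
\[
\sum_{i,j=0}^{N} c_i c_j\, t_{i+j}
 \;=\; \sum_{w\in V(\Gamma)} \Bigl(\sum_{i=0}^{N} c_i\, p_i(v_0,w)\Bigr)^{\!2} \;\geq\; 0,
\]
where the outer sum is again finite because only vertices in the ball of radius $N$ around $v_0$ contribute. By the Hamburger theorem quoted earlier, there exists a positive measure $\mu$ on $\mathbb{R}$ with $t_n=\int x^n\,d\mu$. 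Since $t_0=1$ (the unique loop of length $0$ is the empty loop at $v_0$), the measure $\mu$ has total mass $1$, i.e.\ it is a probability measure, completing the proof.

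The whole argument is essentially the observation that the Gram matrix of the countable family of finitely supported vectors $f_i\colon w\mapsto p_i(v_0,w)$ is positive semidefinite, which is automatic. The only genuine point to check is the legitimacy of the midpoint decomposition sum, and this is immediate from local finiteness; there is no serious obstacle. I expect the remark worth emphasising in the write-up to be that no boundedness assumption on the adjacency operator is needed, which is why the Hamburger framework (rather than Stieltjes, or spectral calculus for bounded self-adjoint operators) is the natural setting.
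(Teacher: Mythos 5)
Your proof is correct and takes essentially the same approach as the paper's: both verify positive semidefiniteness of the Hankel matrix by expressing the quadratic form $\sum_{i,j} c_i c_j t_{i+j}$ as a finite sum of squares over vertices of $\Gamma$, which the paper phrases equivalently as a Gram matrix of the vectors $x_n = \sum_v p_v b_v$ in the free real inner product space on $V(\Gamma)$. The only cosmetic difference is that you explicitly note $t_0 = 1$ to upgrade the positive measure to a probability measure, a normalization step the paper's proof omits.
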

\begin{proof} The sequence $\textit{\textbf{t}}=t_{0},t_{1},\ldots$ is a Hamburger moment sequence if and only if the matrix \[H_{\infty}^{(0)}(\textit{\textbf{t}})=\begin{bmatrix}
    t_{0} & t_{1} & t_{2} & \dots \\
    t_{1} & t_{2} & t_{3} & \dots \\
    t_{2} & t_{3} & t_{4} & \dots \\
    \vdots & \vdots & \vdots & \ddots
\end{bmatrix}\]
is positive semidefinite. To prove this, we will show that this is the matrix representation of a positive definite bilinear form.

Let $M$ be the inner product space over $\mathbb{R}$ defined by the orthonormal basis $\{b_{v}|v\in V(\Gamma)\}$. For each $n\in\mathbb{Z}$ we let $x_{n}\in M$ be the element defined by 
\[x_{n}=\sum_{v\in V(\Gamma)}p_{v}b_{v},\]
where $p_{v}$ is the number of paths of length $n$ in $\Gamma$ from $v_{0}$ to $v$. Then it is easy to see that for any non-negative integers $m$ and $n$, the value $\langle x_{n},x_{m}\rangle$ is equal to the number $t_{m+n}$ of paths of length $m+n$ in $\Gamma$ from $v_{0}$ to itself. Now let $X$ be the subspace of $M$ spanned by $\{x_{0},x_{1},\ldots\}$. Then $A$ is the matrix representation of the inner product $\langle,\rangle$, restricted to $X$, with respect to the spanning set $\{x_{0},x_{1},\ldots\}$. Therefore, $H_{\infty}^{(0)}(\textit{\textbf{t}})$ is positive semidefinite. Note that if $\{x_{0},x_{1},\ldots\}$ are linearly independent, then $H_{\infty}^{(0)}(\textit{\textbf{t}})$ is positive definite.
\end{proof}

\begin{thm}Let $C\in\mathbb{Z}_{>0}$ and let $\Gamma$ be a graph with a fixed base vertex $v_{0}$, such that each vertex in $\Gamma$ has degree at most $C$.  For each $n\in\mathbb{Z}_{\geq0}$, let $t_{n}$ be the number of loops of length $n$ in $\Gamma$ which start and end at $v_{0}$. There exists a probability measure $\mu$ on $\mathbb{R}_{>0}$ such that for each $n\in\mathbb{Z}_{\geq0}$, the $n$th moment of $\mu$ is equal to $t_{2n}$. In other words, $t_{0},t_{2},t_{4},\ldots$ is a Stieltjes moment sequence.

Moreover, $\mu$ is unique and its support is contained in the interval $[0,C^{2}]$.  
\end{thm}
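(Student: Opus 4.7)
The plan is to leverage the inner product space $(M, \langle\cdot,\cdot\rangle)$ and vectors $x_n = \sum_{v \in V(\Gamma)} p_v b_v$ from the proof of the preceding theorem, where $p_v$ counts length-$n$ walks from $v_0$ to $v$. The essential identity $t_{m+n} = \langle x_m, x_n\rangle$ yields $t_{2n} = \|x_n\|^2$, and this extra squared structure provides precisely the second positivity condition that upgrades the Hamburger conclusion to Stieltjes.

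Setting $s_n := t_{2n}$, I would verify that the Hankel matrices $H_\infty^{(0)}(\textit{\textbf{s}})$ and $H_\infty^{(1)}(\textit{\textbf{s}})$ are both positive semidefinite by identifying them as Gram matrices: the $(i,j)$-entry $s_{i+j} = t_{2(i+j)} = \langle x_{2i}, x_{2j}\rangle$ makes $H_\infty^{(0)}(\textit{\textbf{s}})$ the Gram matrix of the even-indexed family $\{x_0, x_2, x_4, \ldots\}$, while the $(i,j)$-entry $s_{i+j+1} = t_{2(i+j+1)} = \langle x_{2i+1}, x_{2j+1}\rangle$ makes $H_\infty^{(1)}(\textit{\textbf{s}})$ the Gram matrix of $\{x_1, x_3, x_5, \ldots\}$. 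Both are therefore PSD. By the Stieltjes moment theorem stated earlier, there is a positive measure $\mu$ on $[0,\infty)$ whose $n$th moment is $s_n$, and $t_0 = 1$ forces $\mu$ to be a probability measure.

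For the support bound, the degree hypothesis gives $t_{2n} \leq C^{2n}$, since a closed walk at $v_0$ of length $2n$ is in particular one of at most $C^{2n}$ walks of length $2n$ starting at $v_0$. If $\mu$ placed positive mass on $(C^2,\infty)$, one could find $\varepsilon > 0$ with $\mu((C^2+\varepsilon,\infty)) = \delta > 0$, giving $s_n \geq \delta(C^2+\varepsilon)^n$ and contradicting the bound $s_n \leq C^{2n}$. Uniqueness of $\mu$ then follows either from the compactness of the support (Weierstrass density of polynomials in $C([0,C^2])$, so the moments determine $\mu$) or directly from the Stieltjes uniqueness criterion, since $s_n^{-1/(2n)} \geq C^{-1}$ makes $\sum s_n^{-1/(2n)}$ diverge.

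I do not expect a serious obstacle here — once one spots that $t_{2n} = \|x_n\|^2$, the three assertions follow from a Gram-matrix identification, one growth estimate, and compact-support uniqueness respectively. The one mild point to verify en route is that each $x_n$ is a genuine element of $M$: because $\Gamma$ has bounded degree, the ball of radius $n$ around $v_0$ is finite, so $x_n$ is a finite linear combination of the $b_v$ and no $\ell^2$-convergence issue arises. Without the degree bound, this finiteness could fail and one would instead need to pass to the bounded self-adjoint adjacency operator on $\ell^2(V(\Gamma))$ and apply the spectral theorem — an indication of why the hypothesis $\deg \leq C$ is doing real work.
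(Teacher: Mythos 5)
Your proof is correct and takes essentially the same route as the paper. Your Gram-matrix identification of $H_\infty^{(0)}(\textit{\textbf{s}})$ with the even-indexed family $\{x_0,x_2,\ldots\}$ and of $H_\infty^{(1)}(\textit{\textbf{s}})$ with the odd-indexed family $\{x_1,x_3,\ldots\}$ is exactly the paper's observation that these two matrices are principal submatrices (on even, respectively odd, index sets) of the Hankel matrix $H_\infty^{(0)}(\textit{\textbf{t}})$ already shown positive semidefinite; the support bound via $t_{2n}\le C^{2n}$ and the compact-support uniqueness are likewise the same. The only slight inaccuracy is in your closing remark: finiteness of each $x_n$ already follows from local finiteness (assumed in the preceding theorem), so the uniform degree bound $C$ is not what makes $x_n\in M$ — it is needed only for the estimate $t_{2n}\le C^{2n}$ that yields compact support and hence uniqueness.
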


\begin{proof} In order to show that the sequence $\textit{\textbf{s}}=t_{0},t_{2},t_{4},\ldots$ is a Stieltjes moment sequence, it suffices to prove that the two matrices
\[H_{\infty}^{(0)}(\textit{\textbf{s}})=\begin{bmatrix}
    t_{0} & t_{2} & \dots \\
    t_{2} & t_{4} & \dots \\
    \vdots & \vdots & \ddots 
\end{bmatrix}
~~~~~\text{and}~~~~~
H_{\infty}^{(1)}(\textit{\textbf{s}})=\begin{bmatrix}
    t_{2} & t_{4} & \dots \\
    t_{4} & t_{6} & \dots \\
    \vdots & \vdots & \ddots
\end{bmatrix}\]
are positive semidefinite. From the previous theorem, we know that the matrix
\[H_{\infty}^{(0)}(\textit{\textbf{t}})=\begin{bmatrix}
    t_{0} & t_{1} & t_{2} & \dots \\
    t_{1} & t_{2} & t_{3} & \dots \\
    t_{2} & t_{3} & t_{4} & \dots \\
    \vdots & \vdots & \vdots & \ddots
\end{bmatrix}\]
is positive semidefinite. Hence any principal submatrix of $H_{\infty}^{(0)}(\textit{\textbf{t}})$ (using the same rows and columns) is also positive semidefinite. Since both the matrices $H_{\infty}^{(0)}(\textit{\textbf{s}})$ and $H_{\infty}^{(1)}(\textit{\textbf{s}})$ are such principal submatrices of $H_{\infty}^{(0)}(\textit{\textbf{t}})$, each of these matrices is positive semidefinite. Therefore, the sequence $t_{0},t_{2},t_{4},\ldots$ is a Stieltjes moment sequence. Now, since each vertex of the graph has degree at most $C$, the number of paths of length $n$ is at most $C^{n}$. Hence we have the inequality
\[\int_{0}^{\infty}x^{n}d\mu=t_{2n}\leq C^{2n}.\]
Therefore, the support of $\mu$ must be contained in the interval $[0,C^{2}]$. This also implies that $\mu$ is unique.  
\end{proof}

In particular, if we let $G$ be a finitely generated group, with inverse closed generating set $S$, and $\Gamma$ be the corresponding Cayley graph, then the even terms of the cogrowth sequence for $\Gamma$ form a Stieltjes moment sequence. Moreover, each vertex has degree $|S|$ so the support of the corresponding measure $\mu$ is contained in the interval $[0,|S|]$. As described in the previous subsection, we can compute lower bounds $b_{n}$ for the exponential growth rate of any such sequence. Turning our attention to Thompson's group, using 31 terms of the cogrowth sequence, we have computed the corresponding terms $\alpha_{0},\alpha_{1},\ldots,\alpha_{31}$. Using these we have computed the rigorous lower bound $b_{31}\approx13.269$ for the exponential growth rate of the cogrowth sequence of Thompson's group. If we assume that the sequences $\alpha_{0},\alpha_{2},\ldots$ and $\alpha_{1},\alpha_{3},\ldots$ are increasing, we get the stronger lower bound $(\sqrt{\alpha_{30}}+\sqrt{\alpha_{31}})^2\approx13.706$. In Section \ref{sec:Thompson} below we extrapolate the sequence of bounds $\{b_n\}$ to estimate the growth constant $\mu,$ and find $\mu \approx 15.0.$
 
\section{Series Analysis}\label{sec:analysis}
We have series for six groups, which we will consider in order. Firstly, the group ${\mathbb Z}^2,$ then the Heisenberg group, the lamplighter group  $L=\mathbb{Z}_{2}\wr\mathbb{Z},$ the two groups $\mathbb{Z}\wr\mathbb{Z}$ and $( \mathbb{Z}\wr\mathbb{Z})\wr\mathbb{Z},$ the Navas-Brin group $B$ \cite{N04,B05} and finally Thompson's group $F$. We will analyse each of these in turn. 

In all cases our initial analysis is based on the behaviour of the ratio of successive terms, with other methods deployed as appropriate. In the simplest situation we consider, which is when the asymptotic form of the coefficients is $c_n \sim c \cdot \mu^n \cdot n^g,$ one has that the {\it ratio} of successive coefficients is asymptotically linear when plotted against $1/n,$ as
\begin{equation} \label{ratios}
r_n = \frac{c_n}{c_{n-1}}=\mu \left (1 + \frac{g}{n} + {\rm o}\left (\frac{1}{n}\right )\right ).
\end{equation}
It is therefore natural to plot the ratios $r_n$ against $1/n.$
If the correction term ${\rm o}\left (\frac{1}{n}\right )$ can be ignored\footnote{In the simplest cases, such as the present one, the correction term will be ${\rm O}\left (\frac{1}{n^2}\right ).$}, such a plot will be linear,
with gradient $\mu \cdot g,$ and intercept $\mu$ at $1/n = 0.$ If the growth constant $\mu$ is known, or can be guessed, better estimates of the exponent $g$ can be made by extrapolating the sequence $$g_n = (r_n/\mu-1)\cdot n = g + o(1).$$

More complicated asymptotic forms for the coefficients can give rise to different expressions for the ratios, as we show below.

\subsection{The group ${\mathbb Z}^2$.}
For the group ${\mathbb Z}^2,$ the coefficients of the cogrowth series are known exactly, $c_n = {2n \choose n}^2,$ and so the ratio of successive terms is $$r_n=\frac{c_n}{c_{n-1}}=16 \left ( 1 - \frac{1}{n} + \frac{1}{4n^2} \right ).$$

A plot of the ratios against $1/n$ is shown in Figure \ref{fig:zsq-rat}, based on the first 50 coefficients. It is clearly going to the expected limit of 16. The exponent $g$ should be $-1,$ and we plot estimators $g_n$ against $1/n$ in Figure \ref{fig:zsq-g}, which is also clearly going to the expected limit $-1.$ This corresponds to a logarithmic singularity of the generating function, $$C_{\mathbb{Z}^2}(x) \sim c\cdot \log(1 - 16x).$$

\begin{figure}
 \setlength{\captionindent}{0pt}
\begin{minipage}{0.48\textwidth}
\includegraphics[width=0.97\linewidth]{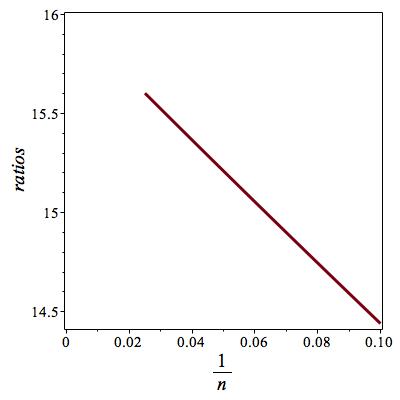}
\captionof{figure}{ Plot of ${\mathbb Z}^2$ ratios against $1/n.$}
\label{fig:zsq-rat}
\end{minipage}\hfill
\begin{minipage}{0.48\textwidth}
\includegraphics[width=0.97\linewidth]{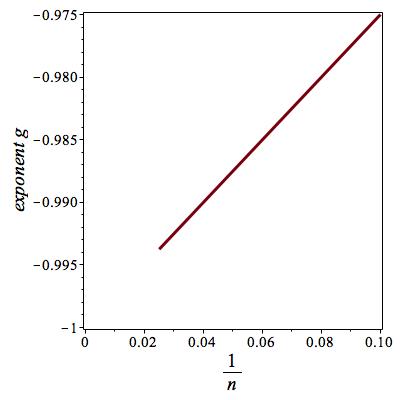}
\captionof{figure}{Estimators of exponent $g$ for ${\mathbb Z}^2$ vs. $1/n.$}
\label{fig:zsq-g}
\end{minipage}
\end{figure}

For this simple example one can do much better by using the package {\em gfun}, available in Maple, and asking for the underlying ordinary differential equation for the generating function, given the first 20 or so coefficients. In this way one immediately obtains the result for the generating function $$C_{\mathbb{Z}^2}(x) = \sum c_n x^n =  2 {\bf K} \left ( \frac{4\sqrt{x}}{\pi} \right ),$$ where ${\bf K} $ is the complete elliptic integral of the first kind. 

\subsection{The Heisenberg group.}

We have calculated 90 terms of the generating function, and show that this is sufficient to obtain a very precise asymptotic representation of the coefficients.
The leading order asymptotics of the coefficients is known \cite{G11} to be $c_n \sim {16^n }/(2n^2),$ corresponding to a generating function $$C_{Heisenberg} \sim \frac{1}{2} (1-16x)\log(1-16x).$$ 
We have analysed this series in the same way as described above for the group ${\mathbb Z}^2$.

A plot of the ratios against $1/n$ is shown in Figure \ref{fig:heis-rat}. It is clearly going to the expected limit of 16. The exponent $g$ should be $-2,$ and we plot estimators $g_n$ against $1/n$ in Figure \ref{fig:heis-g}, which are also clearly going to the expected limit $-2.$

\begin{figure}
\setlength{\captionindent}{0pt}
\begin{minipage}{0.48\textwidth}
  \includegraphics[width=0.97\linewidth]{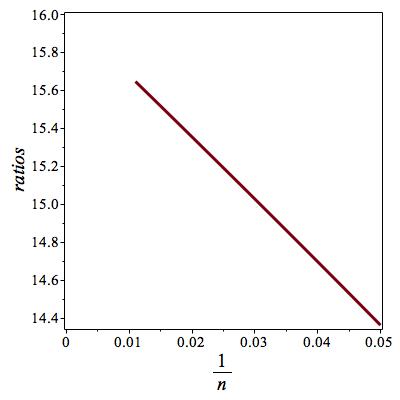}  
   \caption{Plot of Heisenberg group ratios against $1/n.$}
   \label{fig:heis-rat}
\end{minipage}\hfill
\begin{minipage}{0.48\textwidth}
 \includegraphics[width=0.97\linewidth]{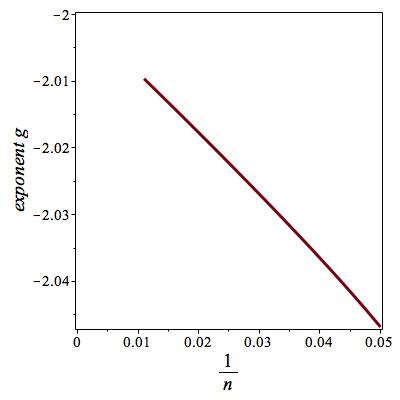}  
  \caption{Estimators of exponent $g$ for the Heisenberg group vs. $1/n.$}
   \label{fig:heis-g}
\end{minipage}
\end{figure}

In order to obtain higher-order asymptotic terms, we subtract the known leading-order term from the coefficients, forming the sequence $$c^{(1)}(n) = c_n- {16^n }/(2n^2).$$ A ratio analysis of this sequence strongly suggests that $c^{(1)}(n) \sim const/n,$ implying that $c_n \sim {16^n }/(2n^2)+ const./n^3.$ Such behaviour is consistent with a simple algebraic singularity of the generating function. Accordingly, we attempted a linear fit to the assumed form $c_n/16^n = 1/(2n^2)+ k_1/n^3+ k_2/n^4 + k_3/n^5.$ We did this by solving the linear system given by setting $n=m-1, \,\, n=m, \,\, n=m+1$ in the preceding equation, and solving for $k_1,\,\, k_2, \,\, k_3,$
with $m$ ranging from 20 to the maximum possible value 89. We obtain an $m$-dependent sequence of estimates of the amplitudes $k_1,\,\, k_2, \,\, k_3,$ which we extrapolated against appropriate powers of $1/m.$ 

In this way we estimate $k_1=0.93341,$ $k_2=1.530,$ and $k_3=3.30,$ where we expect errors in these estimates to be confined to the last quoted digit. 

To summarise, we find the asymptotics of the coefficients of the cogrowth series of the Heisenberg group to be $$c_n = 16^n \left( \frac{1}{2n^2}+ \frac{0.93341}{n^3}+ \frac{1.530}{n^4} + \frac{3.30}{n^5} + O\left (\frac{1}{n^6} \right ) \right ).$$

\subsection{The lamplighter group.}\label{lamp}
The lamplighter group $L$ is the wreath product of the group of order two with the integers, $L=\mathbb{Z}_{2}\wr\mathbb{Z}.$  From \cite{R03} we know that for this group, 
\begin{equation} \label{coeff}
c_n \sim c \cdot 9^n \cdot \kappa^{n^{1/3}} \cdot n^{1/6}.
\end{equation}
 So in this example we see the presence of a stretched-exponential term, $\kappa^{n^{1/3}},$ which makes the analysis more difficult. As remarked above, we have generated 201 terms of the cogrowth series, and show how these terms can be used to estimate the asymptotic behaviour of the coefficients.

If the coefficients of a series include a stretched-exponential term, so that $$a_n \sim c \cdot \mu^n \cdot \kappa^{n^\sigma} \cdot n^g,$$ with $0 < \sigma, \, \kappa < 1,$ then the ratio of successive terms behaves as 
$$r_n = \frac{a_n}{a_{n-1}} \sim \mu \left ( 1 + \frac{\sigma\log {\kappa}}{n^{1-\sigma}} + \frac{g}{n} + \cdots \right ).$$ Experimentally, the presence of such a stretched-exponential term is signalled by the fact that the ratio plots against $1/n$ exhibit curvature, and that this curvature can be eliminated, or at least substantially reduced, by plotting the ratios against $1/n^{1-\sigma},$ where $\sigma$ is roughly estimated by choosing its value so as to maximise linearity. This theory is developed in greater detail, along with several examples, in \cite{G15}. 

Because of the presence of two terms in the ratio plots, one of order O$(n^{\sigma-1})$ the other of order O$(1/n),$ there is some competition between these two terms, which can make it difficult to estimate the value of $\sigma$ just from the linearity of the ratio plots. So we first eliminate the O$(1/n)$ term by calculating the modified ratios 
\begin{equation} \label{mod-rat}
r_n^{(1)} = n\cdot r_n - (n-1) \cdot r_{n-1} = \mu \left ( 1 + \frac{\sigma^2\log {\kappa}}{n^{1-\sigma}} + o\left (\frac{1}{n}\right )  \right ).
\end{equation}
 In Figure \ref{fig:l1} we show the modified ratios plotted against $1/n^{2/3},$ which is seen to be linear, and extrapolating to the known growth constant of  9. While not shown, we also plotted the modified ratios against $1/\sqrt{n}$ and against $1/n^{3/4}.$ These were visibly convex upward and concave downward, respectively. One would conclude that $1/2 < \sigma < 3/4,$ and bearing in mind that in all known such behaviour, $\sigma$ is a simple rational fraction (arguably simply related to dimensionality), one would conjecture that $\kappa=2/3.$ However, we can also estimate the value of $\sigma$ by other means.

\begin{figure}[htbp] 
 \centering
   \includegraphics[width=3in]{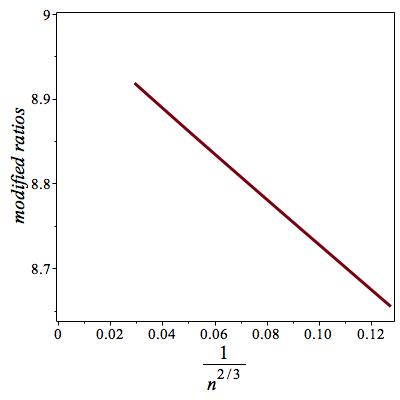} 
   \caption{ Modified lamplighter group ratios vs. $n^{-2/3}.$}
   \label{fig:l1}
\end{figure}

If we assume $\mu=9,$ then from (\ref{mod-rat}) it follows that a plot of $l_n = \log|1- r_n^{(1)}/\mu|$ against $\log(n)$ should be linear with gradient $\sigma-1.$ This plot (not shown) is indeed visually linear. To calculate the gradient, which will vary slightly with $n,$ we calculate the local gradient $(l_n-l_{n-1})/(\log(n)-\log(n-1)),$ and show this plotted against $1/n^{4/3}$ in Figure \ref{fig:l2}. This plot is clearly going to a limit very close to $-2/3,$ as expected.

\begin{figure}[h!]
\setlength{\captionindent}{0pt}
\begin{minipage}{0.48\textwidth}
\centerline { \includegraphics[width=0.97\linewidth]{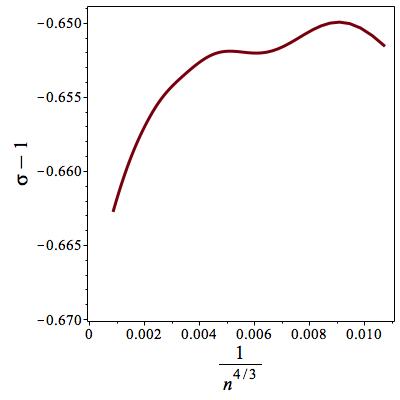}   }
   \caption{ Estimates of $\sigma-1$ vs. $n^{-4/3}.$}
   \label{fig:l2}
\end{minipage}\hfill
\begin{minipage}{0.48\textwidth}
\centerline {  \includegraphics[width=0.97\linewidth]{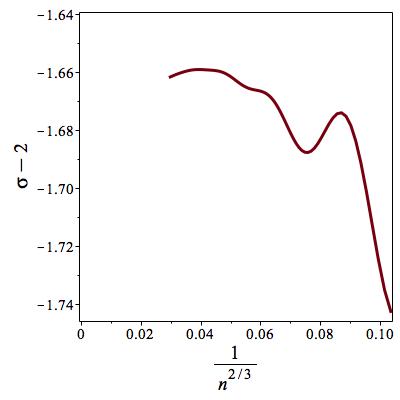}   }
  \caption{ Estimates of $\sigma-2$ vs. $n^{-2/3}.$}
   \label{fig:l3}
\end{minipage}
\end{figure}

One can also find estimators for the exponent $\sigma$ without assuming or knowing the value of the growth constant $\mu.$ Taking the ratio of the modified ratios eliminates the growth constant $\mu,$ so that $$r_n^{(2)} = \frac{r_n^{(1)}}{r_{n-1}^{(1)}} = 1 - \frac{\sigma^2(1-\sigma)\log{\kappa}}{n^{2-\sigma}}+o(n^{\sigma-2}).$$ So a plot of $ \log| r_n^{(2)}-1|$ against $\log{n}$ should be linear with gradient $\sigma-2.$ As above, we don't show this uninteresting linear plot, but instead show the local gradient, plotted against $1/n^{2/3},$ in Figure \ref{fig:l3}, which appears to be going to a value around $-1.67,$ consistent with the known exact value $-5/3.$

Assuming the values $\mu=9,$ and $\sigma=1/3,$ we estimate the remaining parameters in the asymptotic expression by direct fitting to the logarithm of the coefficients. From $c_n \sim c \cdot 9^n \cdot \kappa^{n^{1/3}} \cdot n^{g}$ we get $$\log{c_n}- n\cdot \log{9} \sim n^{1/3} \cdot \log{\kappa} + g \cdot \log{n} + \log{c}.$$  As in the preceding analysis of the Heisenberg group coefficients, we fit successive triples of coefficients to get estimates of the three unknowns, $\log{\kappa},$ $g$ and $\log{c}.$ The results are shown in Figures \ref{fig:l4}, \ref{fig:l5}, and \ref{fig:l6} respectively.

\begin{figure}[h!]
\setlength{\captionindent}{0pt}
\begin{minipage}{0.34\textwidth}
\centerline {  \includegraphics[width=0.9\linewidth]{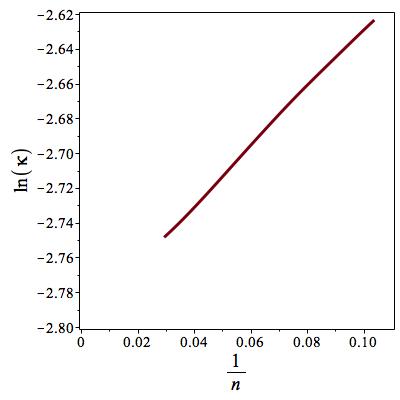}    }
   \caption{ Estimates of $\log{\kappa}$ vs. $1/n.$}
   \label{fig:l4}
\end{minipage}\hfill
\begin{minipage}{0.3\textwidth}
\centerline {   \includegraphics[width=0.97\linewidth]{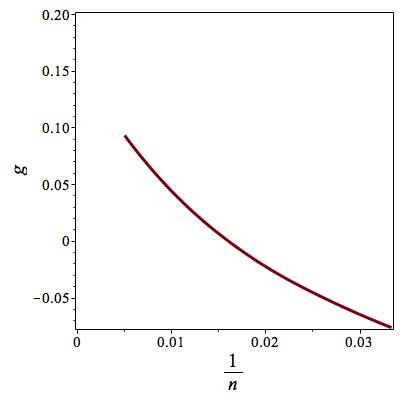}   }
  \caption{ Estimates of exponent $g$ vs. $1/n.$ The exact value is $1/6.$}
   \label{fig:l5}
\end{minipage}\hfill
\begin{minipage}{0.33\textwidth}
\centerline {  \includegraphics[width=0.97\linewidth]{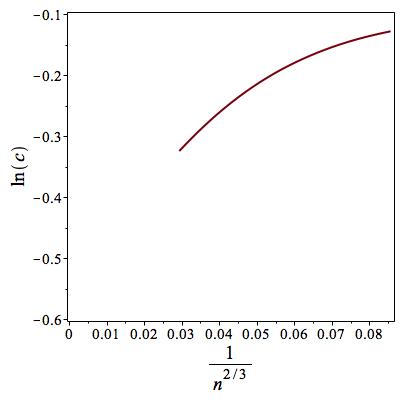}  }
  \caption{ Estimates of $\log{c}$ vs. $n^{-2/3}.$}
   \label{fig:l6}
\end{minipage}
\end{figure}

From these plots, we estimate $\log {\kappa} \approx -2.78,$ $ g \approx 0.17,$ and $\log {c} \approx -0.6.$ If we use the fact that we know that the exponent $g=1/6,$ we can get refined estimates of the remaining parameters, giving $\log {\kappa} \approx -2.775,$  and $\log {c} \approx -0.55,$ so that $\kappa \approx 0.0623,$ and $c \approx 0.58.$ As far as we are aware, these two constants have not previously been estimated.

\subsection{Analysis of group $\mathbb{Z} \wr {\mathbb Z}$}\label{zsquared}

As discussed in the introduction, for the groups $\mathbb{Z} \wr_d {\mathbb Z},$ there is an  additional logarithmic factor associated with the stretched-exponential term. For $d=1$
the group $\mathbb{Z} \wr {\mathbb Z}$ has coefficients that behave as $$a_n \sim const \cdot \mu^n \cdot \kappa^{n^\sigma \log^\delta{n}} \cdot n^g, \,\, {\rm with} \,\,  \sigma= 1/3 \,\, {\rm and}\,\, \delta =2/3.$$ 
It follows that the ratio of successive coefficients behaves as
\begin{equation}\label{ratio1}
r_n = \frac{a_n}{a_{n-1}} \sim \mu \left ( 1 + \frac{\sigma \cdot \log{\kappa}\cdot \log^{\delta}{n}}{n^{1-\sigma}} +  \frac{\delta \cdot \log{\kappa}\cdot \log^{\delta-1}{n}}{n^{1-\sigma}} +\frac{g}{n} + \cdots \right ).
\end{equation}
 
We have generated series to order $x^{276}$ for this group. A simple ratio plot against $1/n$ is strongly concave downwards. Plotting the ratios against $1/n^{2/3}$ gives a plot  which is much closer to linear, but still displays a slight concavity. A simple ratio plot against $1/\sqrt{n}$ by contrast, displays slight convexity.




As we noted in our analysis of the lamplighter group, the term $g/n$ in eqn. (\ref{ratio1}) also makes a contribution (as does the logarithmic term $\log^\delta{n}$), so a clearer picture emerges if this term is eliminated, which we do  by forming the modified ratios (\ref{mod-rat}),  which behave in this case as
\begin{equation} \label{case1}
r_n^{(1)}=  \mu \left ( 1 + \frac{\log{\kappa}}{9n^{2/3}}\left (\log^{2/3}{n}+4\log^{-1/3}{n}-2\log^{-4/3}{n} \right ) +o(n^{-5/3+\epsilon}) \right ).
\end{equation}

 Plots of the modified ratios are shown in Figures \ref{fig:l7},  \ref{fig:l8}, and  \ref{fig:l9}, against $1/\sqrt{n},$ $1/n^{2/3}$ and $1/n^{3/4}$ respectively. It is clear that the plot against $1/n^{2/3}$ is the closest to linear, corresponding to $\kappa = 1/3.$ However, there is still some downward concavity, due to the associated logarithmic terms. To see this even more clearly, we show in Figure \ref{fig:l10} a plot of the modified ratios against $\left (\log^{2/3}{n}+4\log^{-1/3}{n}-2\log^{-4/3}{n} \right )/n^{2/3},$ which is the expected asymptotic behaviour, see (\ref{case1}). This is indistinguishable from linearity.
 
 \begin{figure}[h!]
\setlength{\captionindent}{0pt}
\begin{minipage}{0.48\textwidth}
\centerline { \includegraphics[width=0.97\linewidth]{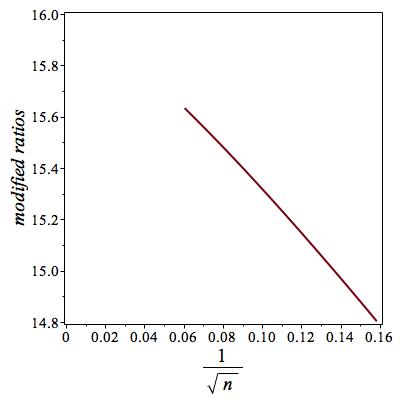}    }
    \caption{Modified ratios for $ \mathbb{Z} \wr {\mathbb Z}$ vs. $1/\sqrt{n}.$}
   \label{fig:l7}
\end{minipage}\hfill
\begin{minipage}{0.48\textwidth}
\centerline { \includegraphics[width=0.97\linewidth]{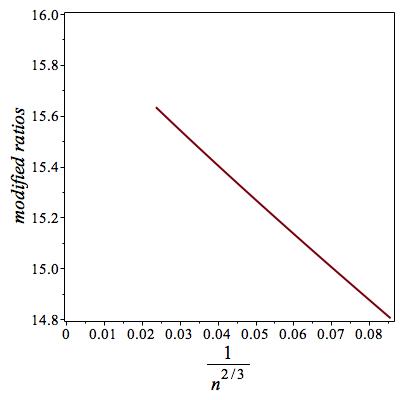} }
   \caption{Modified ratios for $ \mathbb{Z} \wr {\mathbb Z}$ vs. $n^{-2/3}.$}
   \label{fig:l8}
\end{minipage}
\end{figure}

\begin{figure}[h!]
\setlength{\captionindent}{0pt}
\begin{minipage}{0.48\textwidth}
\centerline { \includegraphics[width=0.97\linewidth]{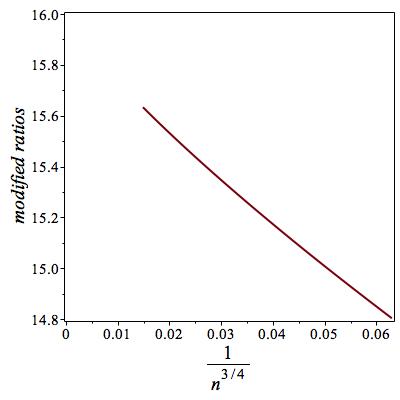} }
   \caption{Modified ratios for $ \mathbb{Z} \wr {\mathbb Z}$ vs. $n^{-3/4}.$}
   \label{fig:l9}
\end{minipage}\hfill
\begin{minipage}{0.48\textwidth}
\centerline {   \includegraphics[width=0.97\linewidth]{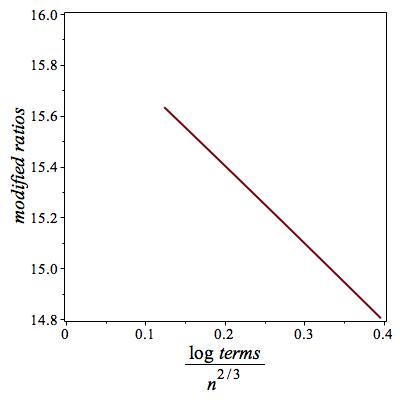} }
   \caption{Modified ratios for $ \mathbb{Z} \wr {\mathbb Z}$ vs.  $\left (\log^{2/3}{n}+4\log^{-1/3}{n}-2\log^{-4/3}{n} \right ) n^{-2/3}.$}
   \label{fig:l10}
\end{minipage}
\end{figure}

To date we haven't tried to estimate $\mu,$ known to be exactly 16. One way to do this is from the modified ratio plots shown above. All are seen to be tracking towards a value very close to 16.

It is also possible to estimate the exponent $\sigma$ directly from the ratios, even without knowing the dominant exponential growth constant $\mu.$ One first forms the ratio of successive ratios, so that
\begin{equation}\label{rratio1}
rr_n^{(1)} = \frac{r_n}{r_{n-1}} = 1 + \frac{\log{\kappa}\cdot \log^{\delta}{n}}{n^{2-\sigma}} \left ( \sigma(\sigma-1)+\frac{\delta(2\sigma-1)}{\log{n}}+\frac{\delta(\delta-1)}{\log^2{n}} \right ) -\frac{g}{n^2}+ o(1/n^2).\end{equation}

As we did above with the ratios, we eliminate the $O(1/n^2)$ term by constructing a modified ratio-of-ratios,
\begin{equation}\label{eqn:rr1}
rr_n^{(2)} = \frac{n^2 rr_n^{(1)} - (n-1)^2 rr_{n-1}^{(1)}}{2n-1} = 1 + \frac{ c\log{^\delta}{n}}{n^{2-\sigma}}\left (1 +O(1/\log{n})\right ),
\end{equation}
 where the constant $c=(\sigma^2(\sigma-1)\log{\kappa})/2.$

Then a plot of $\log|rr_n^{(2)}-1|$ against $\log{n}$ should be close to linear, as the logarithmic term will vary very slowly over the range of $n$-values at our disposal, with gradient $\sigma-2.$ Such a plot (not shown) {\em is} visually linear, but in order to calculate the gradient we find the (local) gradient of the segment joining
$rr_n^{(2)}$ and $rr_{n-1}^{(2)},$ which should approach the ``correct'' value as $n$ increases. This is shown, plotted against $1/n$ in Figure \ref{fig:l11}. It appears to be going to a limit around $-1.62\,\, {\rm to} \,\,-1.61,$ which would imply $\sigma \approx 0.38\,\, {\rm or}\,\, 0.39,$ rather than the known value of $1/3.$

\begin{figure}[htbp] 
 \centering
   \includegraphics[width=3in]{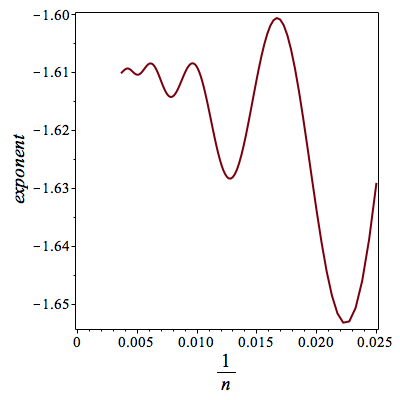} 
   \caption{Estimators of exponent $\sigma-2$ vs. $1/n.$}
   \label{fig:l11}
\end{figure}

 However, if we assume we know that $\delta=2/3,$ and include the confluent logarithmic term $\log^{2/3}{n}$ in the exponent of the stretched-exponential term, plotting instead 
 $$\log\left ( \frac{r_n^{(2)}-1}{\log^{2/3}{n}} \right )$$ 
 against $\log{n},$ the plot is again visually linear. However the corresponding plot of the local gradient, shown in Figure \ref{fig:l12}, is clearly going to a limit around $-5/3,$ consistent with the known value $\sigma=1/3.$

\begin{figure}[htbp] 
\setlength{\captionindent}{0pt}
\begin{minipage}{0.48\textwidth}
   \includegraphics[width=0.97\linewidth]{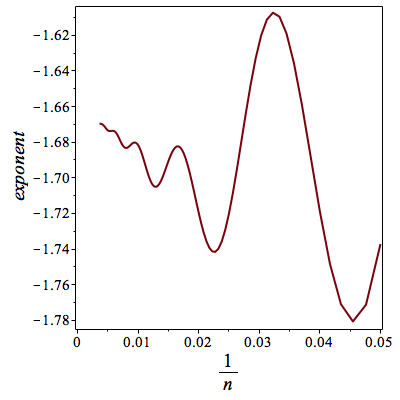} 
   \caption{Estimators of exponent $\sigma-2$ vs. $1/n,$ assuming a confluent logarithmic term.}
   \label{fig:l12}
\end{minipage}\hfill
\begin{minipage}{0.48\textwidth} 
   \includegraphics[width=0.97\linewidth]{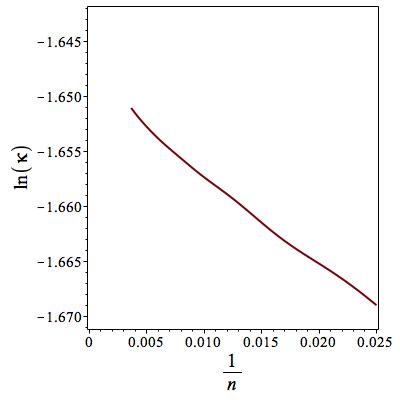} 
   \caption{ Estimates of $\log{\kappa}$ vs. $1/n.$}
   \label{fig:l13}
     \end{minipage}
\end{figure}
 
 Assuming the values $\mu=16,$ and $\sigma=1/3$  and $\kappa=2/3,$ we can estimate the remaining parameters in the asymptotic expression by direct fitting to the logarithm of the coefficients. From $c_n \sim c \cdot 16^n \cdot \kappa^{n^{1/3}\log^{2/3}n} \cdot n^{g}$ we get $$\log{c_n}- n\cdot \log{16} \sim n^{1/3}\cdot \log^{2/3}{n} \cdot \log{\kappa} + g \cdot \log{n} + \log{c}.$$  As in the preceding analysis of the lamplighter group coefficients, we fit successive triples of coefficients to get $n-$dependant estimates of the three unknowns, $\log{\kappa},$ $g$ and $\log{c}.$ The results are shown in Figures \ref{fig:l13}, \ref{fig:l14}, and \ref{fig:l15} respectively.

\begin{figure}[htbp] 
\setlength{\captionindent}{0pt}
\begin{minipage}{0.48\textwidth}
   \includegraphics[width=0.97\linewidth]{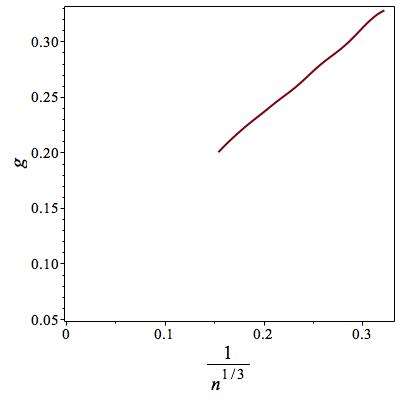} 
   \caption{Estimates of exponent $g$ vs. $n^{-1/3}.$ }
   \label{fig:l14}
\end{minipage}\hfill
\begin{minipage}{0.48\textwidth} 
   \includegraphics[width=0.97\linewidth]{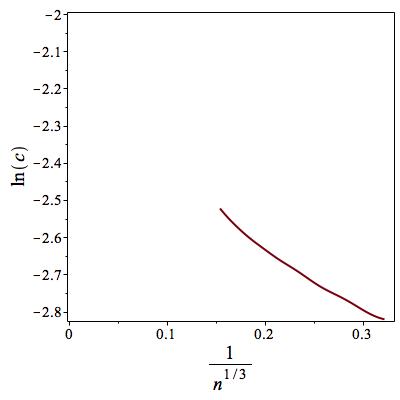} 
   \caption{ Estimates of $\log{c}$ vs. $n^{-1/3}.$}
   \label{fig:l15}
     \end{minipage}
\end{figure}

From these plots, we estimate $\log{\kappa} \approx -1.64,$ but it is difficult to estimate $g.$ It appears to be quite small, close to zero, and could even be negative. It is even more difficult to extrapolate the plot for $\log{c},$  though one might conclude the bound $\log{c} \ge -2.$ These estimates correspond to $\kappa \approx 0.194,$ $ g \approx 0,$ and $c > 0.13.$
As far as we are aware, these three constants have not previously been studied.

In anticipation of our analysis of Thompson's group $F,$ where the growth constant $\mu$ is not known, we attempt to estimate both the exponents $\sigma$ and $\delta$ without knowing the value of $\mu.$ Forming the ratios (\ref{ratio1}) eliminates the constant $c$ in the asymptotic form of the coefficients, and the ratio of ratios (\ref{rratio1}) eliminates $\mu.$ If we now form the sequence 
\begin{equation}\label{eqn:tn}
t_n = \frac{rr_n^{(1)}-1}{rr_{n-1}^{(1)}-1}
\end{equation}
 this eliminates the base $\kappa$ of the stretched-exponential term, and in fact $$n(t_n-1) \sim \sigma-2 +\frac{\delta}{n\log{n}}.$$ So plotting $n(t_n-1)$ against $1/(n\log{n})$ should give an estimate of $\sigma-2.$ To estimate $\delta,$ we form the sequence $$n\log^2{n}(n(t_n-1)-(n-1)(t_{n-1}-1)) \sim -\delta +O(1/\log{n}).$$ We show these plots in Figures \ref{fig:z2sig} and \ref{fig:z2del} respectively. The estimate of $\sigma-2$ appears to be going to a limit of around -1.6 or below, c.f. the known exact value of $-5/3,$ while the estimate of $\delta$ is harder to estimate, but the plot is certainly consistent with the known value $2/3.$ As can be seen, this exponent is difficult to estimate without many more terms than we currently have.

\begin{figure}[htbp] 
\setlength{\captionindent}{0pt}
\begin{minipage}{0.48\textwidth}
   \includegraphics[width=0.97\linewidth]{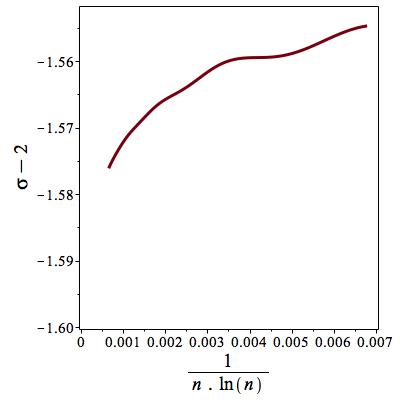} 
   \caption{ Estimates of $\sigma-2$ vs. $1/(n\log{n})$}
   \label{fig:z2sig}
\end{minipage}\hfill
\begin{minipage}{0.48\textwidth} 
 \centering
   \includegraphics[width=0.97\linewidth]{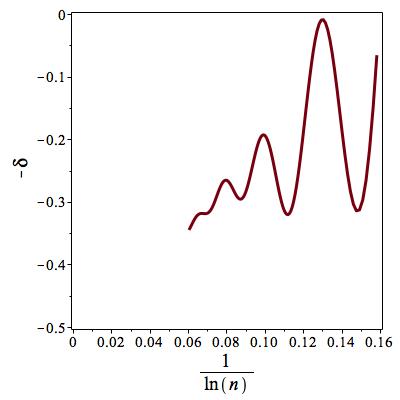} 
   \caption{ Estimates of exponent $-\delta$ vs. $1/\log{n}.$ }
   \label{fig:z2del}
   \end{minipage}
\end{figure}

 \subsection{Analysis of group $(\mathbb{Z} \wr {\mathbb Z})\wr {\mathbb Z}$}\label{zcubed}
 For this group we have 132 terms in the cogrowth series, just less than half the number we have for $ {\mathbb Z}\wr {\mathbb Z},$ so the results are not quite as precise.
We analysed this series the same way as for the group $\mathbb{Z} \wr {\mathbb Z}$.
 For this group it is known that the coefficients grow exponentially, and that the dominant term is $36^n.$ The sub-dominant term is $\kappa^{n^{1/2}\log^{1/2}{n}},$ which again follows from Theorem 3.11 in \cite{PS-C02}. Again, there is presumably a sub-sub dominant term $n^{g}.$

 In this  case we have for the ratio of successive terms:
 \begin{equation}\label{ratio2}
r_n = \frac{a_n}{a_{n-1}} \sim \mu \left ( 1 + \frac{\log{\kappa}\log^{1/2}{n}}{2n^{1/2}} +  \frac{\log{\kappa}}{2n^{1/2}\log^{1/2}{n}} +\frac{g}{n} + \cdots \right ).
\end{equation}

We eliminate the $O(1/n)$ term by forming the modified ratios (\ref{mod-rat}) which behave as

 \begin{equation}\label{case2a}
r_n^{(1)} =  \mu \left ( 1 + \frac{\log{\kappa}}{4\sqrt{n}}\left (\sqrt{\log{n}}+2\log^{-1/2}{n}-\log^{-3/2}{n} \right ) +o(n^{-3/2+\epsilon}) \right ).
\end{equation}

 First, we remark that extrapolating the ratios against $1/n$ gives a plot with considerable curvature (not shown). We plotted the modified ratios, defined above, against $1/n^{\sigma}$ for several values of $\sigma.$ We show the results for $\sigma=1/2$ and $\sigma=1/3$ in Figures \ref{fig:z31} and \ref{fig:z32} respectively. Surprisingly, the latter is closer to linear, however it extrapolates to a value of $\mu$ rather larger than the actual value, $\mu=36.$ However if we include the effect of the logarithmic term in the exponent, and plot (see equation (\ref{case2a})) the modified ratios against $\sqrt{ \frac{\log{n}}{n} },$ the modified ratio plot, shown in Figure \ref{fig:z33},  is indistinguishable from linearity and extrapolates to the correct value of $\mu.$

 \begin{figure}[h!]
\centering
   \includegraphics[width=3in]{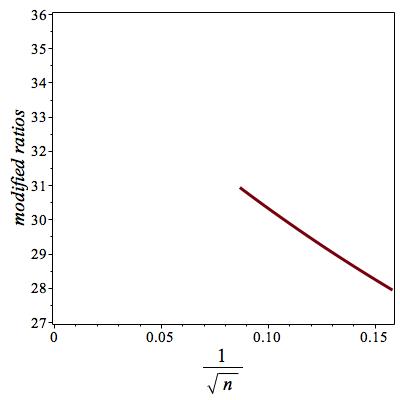} 
   \caption{Modified ratios for $ (\mathbb{Z} \wr {\mathbb Z})  \wr {\mathbb Z}$ vs. $1/\sqrt{n}.$}
   \label{fig:z31}
\end{figure}

\begin{figure}[htbp] 
\setlength{\captionindent}{0pt}
\begin{minipage}{0.48\textwidth}
   \includegraphics[width=0.97\linewidth]{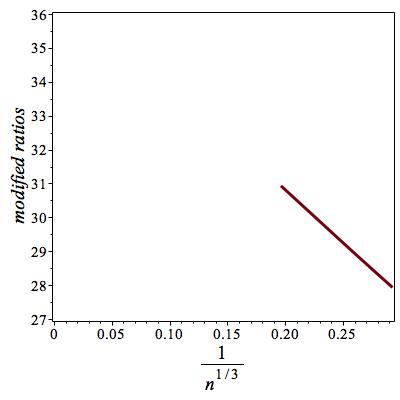} 
   \caption{Modified ratios for $( \mathbb{Z} \wr {\mathbb Z})  \wr {\mathbb Z}$ vs. $n^{-1/3}.$}
   \label{fig:z32}
\end{minipage}\hfill
\begin{minipage}{0.48\textwidth}
   \includegraphics[width=0.97\linewidth]{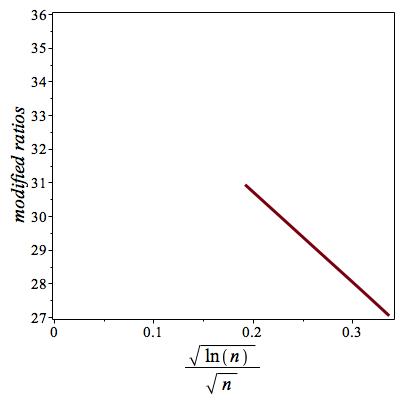} 
   \caption{Modified ratios for $( \mathbb{Z} \wr {\mathbb Z} ) \wr {\mathbb Z}$ vs.  $\sqrt{\log{n}/n}
.$}
   \label{fig:z33}
   \end{minipage}
\end{figure}

Repeating the analysis of the previous section, we attempted to estimate the exponent $\sigma$ without assuming the value of the growth constant $\mu.$
A plot of $\log|rr_n^{(2)}-1|$  (\ref{eqn:rr1}) against $\log{n}$ should be close to linear, (as the logarithmic term will vary only slowly over the range of $n$-values at our disposal), with gradient $\sigma-2.$ Such a plot (not shown) {\em is} visually linear, but in order to calculate the gradient we find the (local) gradient of the segment joining
$rr_n^{(2)}$ and $rr_{n-1}^{(2)},$ which should approach the ``correct'' value as $n$ increases. This is shown, plotted against $1/n$ in Figure \ref{fig:z3s1}. It appears to be going to a limit below $-1.42$
 which would imply $\sigma <  0.58,$ compared to the known value of $1/2.$

\begin{figure}[htbp] 
\setlength{\captionindent}{0pt}
 \begin{minipage}{0.48\textwidth} 
   \includegraphics[width=0.97\linewidth]{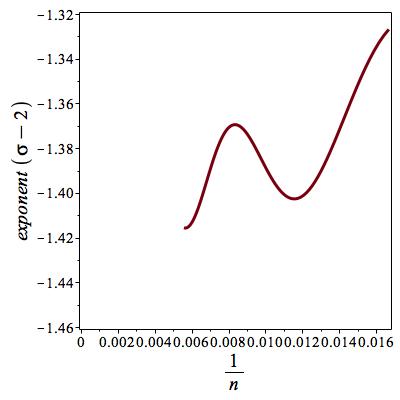} 
   \caption{Estimators of exponent $\sigma-2$ vs. $1/n.$}
   \label{fig:z3s1}
\end{minipage}\hfill
\begin{minipage}{0.48\textwidth} 
   \includegraphics[width=0.97\linewidth]{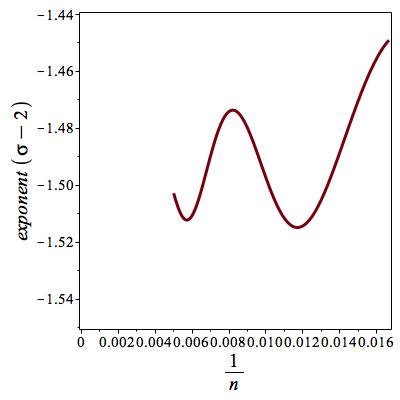} 
   \caption{Estimators of exponent $\sigma-2$ vs. $1/n,$ assuming a confluent logarithmic term.}
   \label{fig:z3s2}
   \end{minipage}
\end{figure}

 However, if we assume we know that $\delta=1/2,$ and include the confluent logarithmic term $\log^{1/2}{n}$ in the exponent of the stretched-exponential term, plotting instead 
 $$\log\left ( \frac{r_n^{(2)}-1}{\log^{1/2}{n}} \right )$$ 
 against $\log{n},$ the plot is again visually linear. Moreover the corresponding plot of the local gradient, shown in Figure \ref{fig:z3s2}, is going to a limit around $-3/2,$ consistent with the known value $\sigma=1/2.$

 Assuming the values $\mu=16,$  $\sigma=1/2$  and $\kappa=1/2,$ we can estimate the remaining parameters in the asymptotic expression by direct fitting to the logarithm of the coefficients. From $c_n \sim c \cdot 36^n \cdot \kappa^{n^{1/2}\log^{1/2}n} \cdot n^{g}$ we get $$\log{c_n}- n\cdot \log{36} \sim n^{1/2}\cdot \log^{1/2}{n} \cdot \log{\kappa} + g \cdot \log{n} + \log{c}.$$  As in the preceding analysis of ${\mathbb Z} \wr {\mathbb Z}$, we fit successive triples of coefficients to get estimates of the three unknowns, $\log{\kappa},$ $g$ and $\log{c}.$ The results for the first two are shown in Figures \ref{fig:z3s3} and \ref{fig:z3s4} respectively. From this, and further analysis with an additonal term in the assumed asymptotic form, we estimate $\log{\kappa} \approx -2.3$ and $g \approx 3.3.$

 \begin{figure}[h!]
\setlength{\captionindent}{0pt}
\begin{minipage}{0.48\textwidth}
   \includegraphics[width=0.97\linewidth]{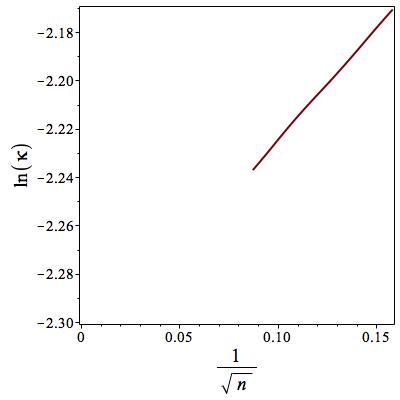} 
   \caption{ Estimates of $\log{\kappa}$ vs. $1/\sqrt{n}.$}
   \label{fig:z3s3}
\end{minipage}\hfill
\begin{minipage}{0.48\textwidth} 
   \includegraphics[width=0.97\linewidth]{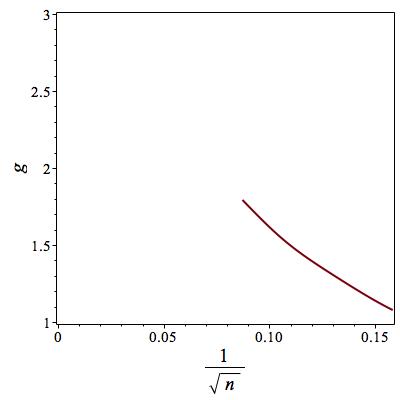} 
   \caption{ Estimates of exponent $g$ vs. $1/\sqrt{n}.$ }
   \label{fig:z3s4}
   \end{minipage}
\end{figure}

Again repeating the analysis of the previous section, we tried to estimate $\sigma$ and $\delta$ directly without knowing $\mu$ or $\kappa.$   Plotting $n(t_n-1)$ (\ref{eqn:tn})  against $1/(n\log{n})$ should give an estimate of $\sigma-2,$ and plotting the sequence $n\log^2{n}(n(t_n-1)-(n-1)(t_{n-1}-1))$ against $1/\log{n}$ should give estimates of $-\delta.$ We show these plots in Figures \ref{fig:z3sig} and \ref{fig:z3del} respectively. The estimate of $\sigma-2$ appears to be going to a limit of below -1.39 or so, c.f. the known exact value of $-1.5,$ while it is not possible to estimate $\delta$ from this plot, but it is not inconsistent with the known value $1/2.$

 \begin{figure}[h!]
\setlength{\captionindent}{0pt}
\begin{minipage}{0.48\textwidth}
   \includegraphics[width=0.97\linewidth]{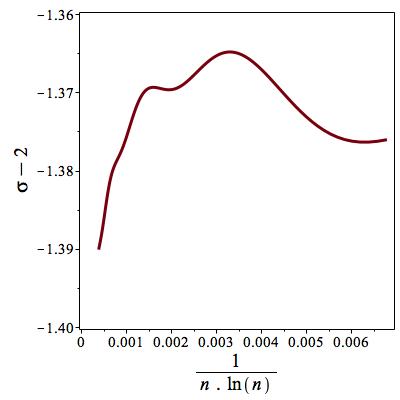} 
   \caption{ Estimates of $\sigma-2$ for $ (\mathbb{Z} \wr {\mathbb Z})  \wr {\mathbb Z}$ vs. $1/(n\log{n})$}
   \label{fig:z3sig}
\end{minipage}\hfill
\begin{minipage}{0.48\textwidth} 
 \centering
   \includegraphics[width=0.97\linewidth]{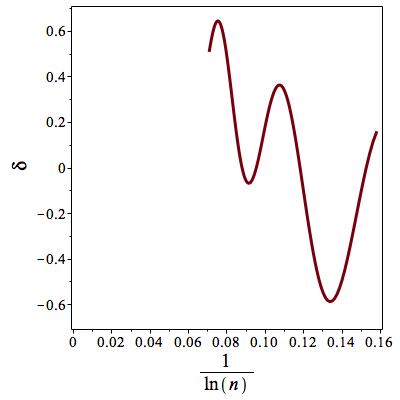} 
   \caption{ Estimates of exponent $\delta$ for $( \mathbb{Z} \wr {\mathbb Z})  \wr {\mathbb Z}$ vs. $1/\log{n}.$ }
   \label{fig:z3del}
\end{minipage}
\end{figure}

\subsection{ The group $\mathbb{Z} \wr_d  \mathbb{Z} $}\label{z98}
In the previous sections we have considered the analysis of the groups $\mathbb{Z} \wr_d  \mathbb{Z} $ for $d=1$ and $d=2.$ We have shown how the stretched-exponential term slows the rate of convergence of the ratios, but that appropriate analysis can still reveal much asymptotic information. However as $d$ increases, it becomes increasingly difficult to extract the asymptotics from a hundred or so terms of the cogrowth series. To see this, we consider the case $d=98.$ Then we know the asymptotic form of the coefficients is $$c_n \sim c \cdot \mu^n \cdot \kappa^{n^\sigma \log^\delta{n}} \cdot n^g,$$ where $\sigma = 49/50$ and $\delta=1/50$ \cite{PS-C02}.

While we could have generated 100 or so terms of this series from the algorithms described above, it will be more instructive to generate a test series with the given asymptotic behaviour, as then we can generate thousands of terms essentially immediately. 

So we have generated coefficients defined by $c_n = c \cdot \mu^n \cdot \kappa^{n^\sigma \log^\delta{n}} \cdot n^g$ with $c=1,$ $\mu=4,$ $\kappa=0.7,$ $g=0.5,$  
 $ \sigma = 49/50$ and $\delta=1/50.$ The ratio of successive terms must go to 4.0, the value of the growth constant\footnote{The growth constant is actually $4(d+1)^2,$ but for this exercise the actual value is irrelevant, so we have chosen a much smaller value.}. Using 128 terms of this test series, we show a plot of the ratios against $1/n$ in Figure \ref{fig:z98-rat1}. It is not possible to assert that, as $n \to \infty$ the ratios will go to 4.0. In Figure \ref{fig:z98-rat2} we show the same plot with 1280 terms. While this curve is steeply increasing, it is still not possible to assert that the limiting value is 4.0. Using 10000 terms, and plotting the ratios against $1/n^{1/50}$ (not shown), we finally see evidence that the extrapolated limit is around 3.8 or 3.9.

 \begin{figure}[h!]
\setlength{\captionindent}{0pt}
\begin{minipage}{0.48\textwidth}
\centerline { \includegraphics[width=0.97\linewidth]{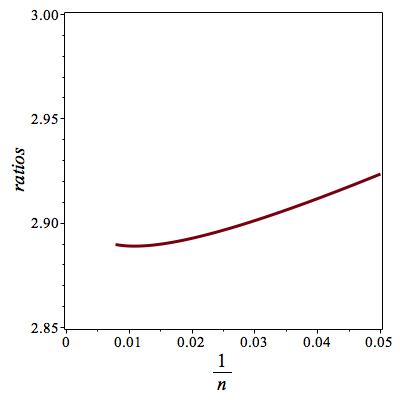} }
   \caption{The first 128 ratios for $\mathbb{Z} \wr_{98}  \mathbb{Z} $ vs. $1/n.$}
   \label{fig:z98-rat1}
\end{minipage}\hfill
\begin{minipage}{0.48\textwidth}
\centerline {   \includegraphics[width=0.97\linewidth]{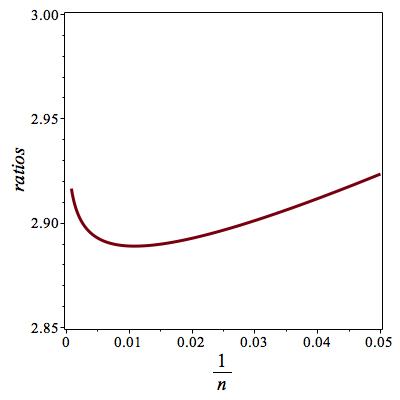} }
   \caption{The first 1280 ratios for $\mathbb{Z} \wr_{98}  \mathbb{Z} $ vs. $1/n.$}
   \label{fig:z98-rat2}
\end{minipage}
\end{figure}
 
 For this series the asymptotic form of the ratios is $$r_n = \mu \left (1 + \frac{49\log\kappa}{50\cdot n^{1/50}}+ \frac{0.5}{n} + o(1/n) \right ), $$ so we might expect more informative results if we eliminate the term $O(1/n),$ which we can do by forming the modified ratios. These are shown, plotted against $1/n^{1/50}$ in Figures \ref{z98modrat1} and \ref{z98modrat2}, based on  the first 128 terms and the first 10000 terms. Extrapolating these to $n \to \infty$ again gives a limit around 3.9.

 \begin{figure}[h!]
\setlength{\captionindent}{0pt}
\begin{minipage}{0.48\textwidth}
   \includegraphics[width=0.97\linewidth]{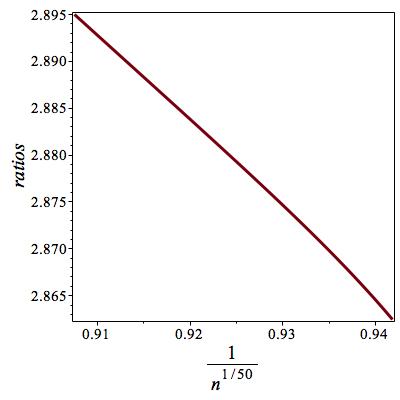} 
   \caption{The first 128 modified ratios for $\mathbb{Z} \wr_{98}  \mathbb{Z} $ vs. $n^{-1/50}.$}
   \label{z98modrat1}
\end{minipage}\hfill
\begin{minipage}{0.48\textwidth}
   \includegraphics[width=0.97\linewidth]{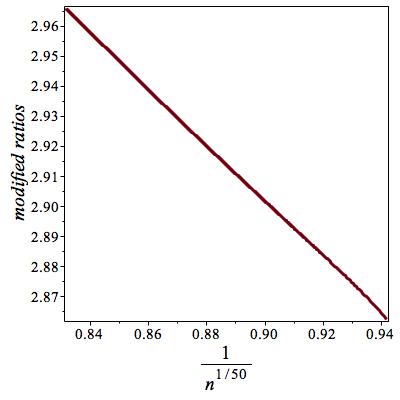} 
   \caption{ The first 10000 modified ratios for $\mathbb{Z} \wr_{98}  \mathbb{Z} $ vs. $n^{-1/50}.$ }
   \label{z98modrat2}
\end{minipage}
\end{figure}

It is possible to estimate the exponent $\sigma$ without knowing $\mu,$ as we showed in previous examples above. In particular, using the method based on equation (\ref{rratio1}), and described immediately below that equation, we show in Figure \ref{fig:sigestz98} a plot of estimators of $\sigma-2$ against $1/n,$ based on a 10000 term series, and it is persuasively going to the known value $-1.02.$

 \begin{figure}[h!]
\centering
   \includegraphics[width=3in]{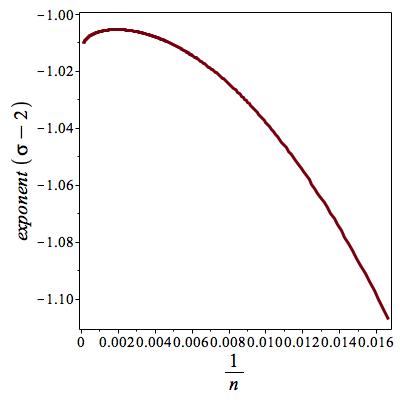} 
   \caption{Estimators of $\sigma-2$ for $ \mathbb{Z} \wr_{98} {\mathbb Z} $ against $1/n$ for $n \le 10000.$}
   \label{fig:sigestz98}
\end{figure}
 
Unfortunately, for no interesting problem is it realistic to get 10000 terms, so this example, and the next, must remain as a cautionary tale, to the extent that there can and do exist groups whose cogrowth series exhibit asymptotic behaviour that is difficult to estimate by numerical methods of the type we have considered.  Another example of similar difficulty is given by the Navas-Brin group $B,$ discussed in the next section.

\section {Series extension}\label{sec:extension}

In this section we develop one further tool that will be extremely useful in our analysis of the series for Thompson's group $F,$ where we have only 32 terms, rather than a hundred or more as in the examples we have been considering. It will also be very helpful in our analysis of the Navas-Brin group $B$, discussed in the next section.

Recall that our analysis of the more complex asymptotic forms that include stretched-exponential terms is based on ratios of successive terms, whereas for simpler groups, with simpler asymptotics, we used the method of differential approximants (DAs). It is obviously highly desirable to have further terms (in particular, further ratios), for all series with non-simple asymptotics, and particularly in those cases where we have comparatively short series, such as the 32 term series we have for Thompson's group $F.$ In order to obtain further ratios (or terms), we use the method of differential approximants {\em to predict subsequent ratios/terms}. The detailed description as to how this is done is given in \cite{G16}. 

We will give two demonstrations of the effectiveness of this method. In the first, we take the first 32 terms of the  series for ${\mathbb Z} \wr {\mathbb Z}$ discussed above, (we have more than 200 terms for this series), and use these to predict the next 89 ratios, from 5th order DAs. As well as the mean ratio, we calculate the standard deviation. We show, in Table \ref{tab:llerror}, a comparison between the actual error in the predicted ratios and the standard deviation of the estimated ratios. It can be seen that the true error lies between 1 and 1.5 standard deviations, which provides some confidence that the predicted ratios are accurate to within an error of 1.5 standard deviations.

For the series simulating the coefficients of the group $\mathbb{Z} \wr_{98}  \mathbb{Z}, $ we showed the importance of long series to reveal the asymptotic behaviour with some precision. In this second example, we take the first 100 terms of this series, and use them to predict the {\em next 315 ratios}. That is, we estimate $c_n/c_{n-1}$ for $n=101\cdots 415.$ 

To see how precisely these ratios can be predicted, we plot the difference between the actual ratios and those calculated by 4th order differential approximants in Figure \ref{fig:z98err}. It can be seen that the error is less than 2 parts in $10^{20}$ for all $n < 416.$ Just to make this perfectly clear, given 100 coefficients, we have predicted the next 315 ratios with an accuracy of some 20 significant digits.

 \begin{figure}[h!]
\setlength{\captionindent}{0pt}
\begin{minipage}{0.48\textwidth}
   \includegraphics[width=0.97\linewidth]{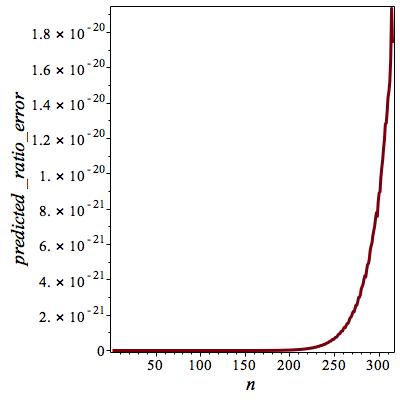} 
   \caption{Absolute error in predicted ratios of $\mathbb{Z} \wr_{98}  \mathbb{Z} $ for $100<n < 416.$}
   \label{fig:z98err}
\end{minipage}\hfill
\begin{minipage}{0.48\textwidth}
   \includegraphics[width=0.97\linewidth]{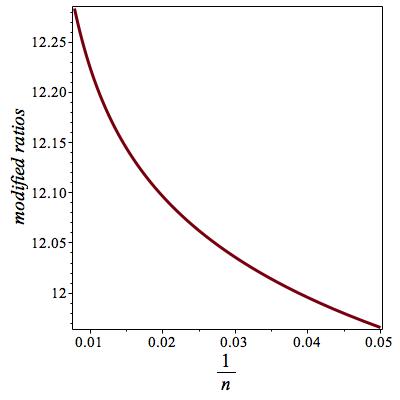} 
   \caption{ The first 128 modified ratios of the Navas-Brin group $B$ vs. $1/{n}.$ }
   \label{fig:modratb}
\end{minipage}
\end{figure}

In a similar fashion, using 4th order DAs, we were able to get 200 extra ratios for the 32-term series for Thompson group $F.$ The maximum error (as estimated by 1.5 s.d. of the DAs) is 1 part in $4 \times 10^{-5}$, which is graphically imperceptible. 
In the Appendix we give the (predicted) next 200 ratios, and their standard deviations. 

\begin{table}
   \centering
   \begin{tabular}{@{}lcc @{}} 
      \hline    
$k$ &      Actual error    & 1 standard deviation\\
\hline
1 & $2.69\times 10^{-17}$ & $2.02\times 10^{-17}$ \\
5 & $1.14\times 10^{-13}$ & $7.85\times 10^{-14}$ \\
10 & $3.37\times 10^{-11}$ & $2.08\times 10^{-11}$ \\
20 & $2.22\times 10^{-8}$ & $1.23\times 10^{-8}$ \\
30 & $9.63\times 10^{-7}$ & $5.39\times 10^{-7}$ \\
40 & $1.22\times 10^{-5}$ & $6.88\times 10^{-6}$ \\
50 & $7.59\times 10^{-5}$ & $4.73\times 10^{-5}$ \\
60 & $3.13\times 10^{-4}$ & $2.23\times 10^{-4}$ \\
70 & $9.39\times 10^{-4}$ & $8.11\times 10^{-4}$ \\
80 & $2.44\times 10^{-3}$ & $2.44\times 10^{-3}$ \\
89 & $4.63\times 10^{-3}$ & $5.38\times 10^{-3}$ \\
\hline
      
      \hline    
   \end{tabular}
   \vspace{2mm}
   \caption{Actual error in coefficient  O$(z^{31+k})$ and 1 standard deviation from the mean of the estimated coefficient.}
   \label{tab:llerror}
\end{table}

\section{Analysis of the Navas-Brin group $B.$}\label{sec:Brin}
This is an amenable group introduced independently by Navas \cite{N04} and Brin \cite{B05}, so we call it the Navas-Brin group $B,$ and is defined in subsection \ref{subsec:nb}. 
It has 2 generators, so the growth rate of the cogrowth sequence is 16. We gave a polynomial-time algorithm to generate the coefficients above, and have used this to generate 128 terms of the co-growth series.
We then used the method of series extension, described above, to give a further 590 ratios, the last of which we expect to be accurate to 1 part in $5 \times 10^{-7},$ while all earlier ratios will have a lower associated error.
We first show a plot of the modified ratios (\ref{mod-rat}) against $1/n$ in Figure \ref{fig:modratb}. Even if we knew nothing about the asymptotics of this group, the curvature of this plot provides strong evidence for a sub-exponential term, and we have proved that it cannot be a regular stretched-exponential term.

That is to say, the asymptotics for this series must grow more slowly than $$c_n \sim c \cdot \mu^n \cdot \kappa^{n^\sigma} \cdot n^g,$$ where $\mu=16,$ $0 < \sigma< 1,$ and $0 < \kappa < 1.$ Possible behaviour might be
$$c_n \sim c \cdot \mu^n \cdot \kappa^{n/\log{n}} \cdot n^g,$$ corresponding to a {\em numerical} value $ \sigma=1,$ which of course hides the logarithmic component.

In that case the ratios will be
$$r_n = \frac{c_n}{c_{n-1}} \sim \mu \left ( 1 + \frac{constant}{\log{n}} + \frac{g}{n} + \cdots \right ).$$ 
Note that we do not insist the the first correction term is $O(1/\log{n}),$ it could be a power of a logarithm, or some  other weakly decreasing function, but it cannot have a power-law  increase. For our purposes it suffices to take this term to be $O(1/\log{n}).$
We show the modified ratios (this gets rid of the $O(1/n)$ term in the asymptotics) in Figures \ref{fig:brinm1} and \ref{fig:brinm2} which are the same plot, but the first uses only the 128 exact coefficients, while the second uses the exact plus predicted ratios. From the first plot, it is clear that it would be an article of faith that the locus is going to 16 as $n \to \infty.$ By contrast, the second plot makes this conclusion far more plausible.

\begin{figure}[h!]
\setlength{\captionindent}{0pt}
\begin{minipage}{0.48\textwidth}
   \includegraphics[width=0.97\linewidth]{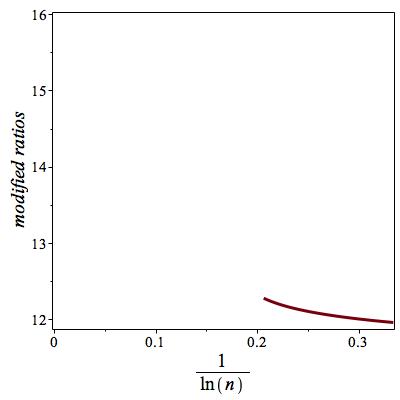} 
   \caption{The first 128 modified ratios for the Navas-Brin group $B$ vs. $1/\log{n}.$}
   \label{fig:brinm1}
\end{minipage}\hfill
\begin{minipage}{0.48\textwidth}
   \includegraphics[width=0.97\linewidth]{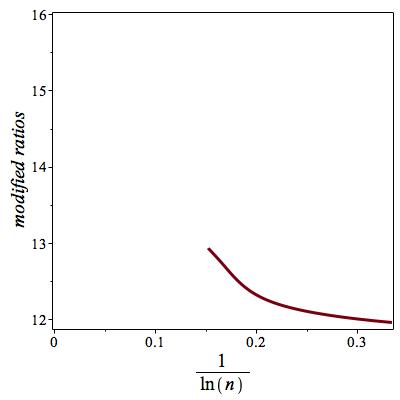} 
   \caption{ The first 718 modified ratios for the Navas-Brin group $B$ vs. $1/\log{n}.$ }
   \label{fig:brinm2}
\end{minipage}
\end{figure}

We next try and estimate the exponent $\sigma,$ which should be 1, without assuming $\mu=16.$ We use the method described below equation (\ref{rratio1}). With the 128 known terms, the estimators of $\sigma-2$ are shown in Figure \ref{fig:brins128}  and  show no evidence of approaching the expected value of $-1.$ If however we use twice as many terms, so using the next 128 predicted ratios, we get the plot shown in Figure \ref{fig:brins256}, which {\em is} plausibly approaching $-1.$ 

This highlights the value of numerically predicting further terms wherever possible.

\begin{figure}[h!]
\setlength{\captionindent}{0pt}
\begin{minipage}{0.48\textwidth}
   \includegraphics[width=0.97\linewidth]{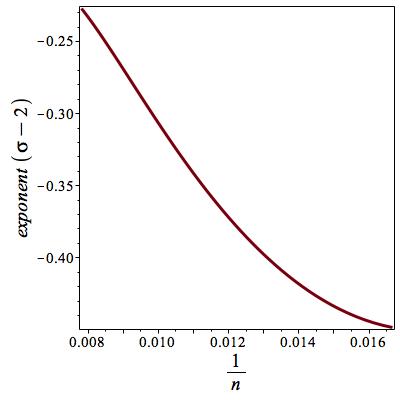} 
   \caption{Estimates of $\sigma-2$ from 128 terms of the Navas-Brin group $B.$ }
   \label{fig:brins128}
\end{minipage}\hfill
\begin{minipage}{0.48\textwidth}
   \includegraphics[width=0.97\linewidth]{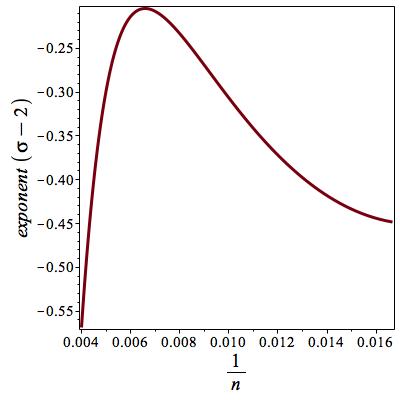} 
   \caption{Estimates of $\sigma-2$ from 256 terms of the Navas-Brin group $B.$ }
   \label{fig:brins256}
\end{minipage}
\end{figure}

\section{Analysis of Thompson's group $F$}\label{sec:Thompson}
For Thompson's group $F$ it is known that the series grows exponentially like $\mu^n.$ If $\mu = 16,$ the group is amenable. If it is amenable, there cannot be a sub-dominant term of the form $\kappa^{n^{\sigma}}$ with $0 < \sigma < 1,$ because the group contains the wreath products
$ {\mathbb Z}\wr {\mathbb Z}\wr {\mathbb Z}\wr \cdots \wr {\mathbb Z}$ as subgroups. This is a consequence of Theorem 1.3 in \cite{PS-C00} and results in \cite{PS-C02}, and is proved as Theorem \ref{thm:3.2} in Section \ref{sec:thom}.

We first study the modified ratios, defined by (\ref{mod-rat}). The modified ratio plot against $1/n$ is shown in figure \ref{fig:thomr1} and displays considerable curvature. By contrast, the same data plotted against $n^{-1/5},$ and shown in figure \ref{fig:thomr2} shows curvature in the opposite direction. This is strong evidence for the presence of a conventional stretched-exponential term of the sort we have seen in our study of the lamplighter group and the family $W_d.$  As mentioned above, the presence of such a term is incompatible with amenability. This is our first piece of evidence that the group is not amenable. Note too that this is quite different to the behaviour observed for the coefficients of the Navas-Brin group $B.$

\begin{figure}[h!]
\setlength{\captionindent}{0pt}
\begin{minipage}{0.48\textwidth}
   \includegraphics[width=0.97\linewidth]{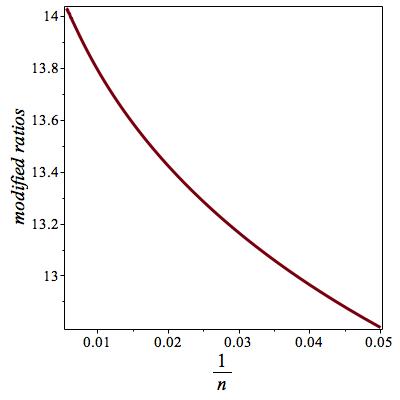} 
   \caption{Modified ratios vs. $1/n$ for Thompson's group $F.$ }
   \label{fig:thomr1}
\end{minipage}\hfill
\begin{minipage}{0.48\textwidth}
   \includegraphics[width=0.97\linewidth]{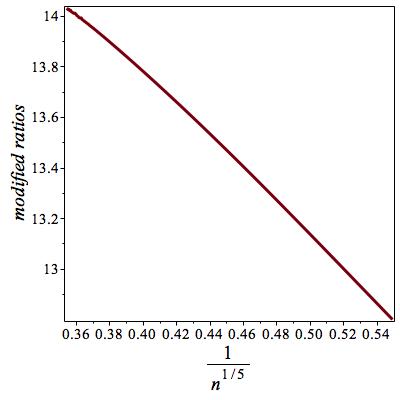} 
   \caption{Modified ratios vs. $n^{-1/5}$ for Thompson's group $F.$ }
   \label{fig:thomr2}
\end{minipage}
\end{figure}

In our subsequent analysis, we use both the exact coefficients and the extrapolated coefficients. While all extrapolated terms can be used in calculating the ratios, once one calculates first and second differences, errors are amplified, and so fewer terms can be used. That is why we quote the number of terms used for different calculations, as it is only to the quoted order that we are confident that the calculated quantities are accurate to graphical accuracy.

To estimate the exponents in the stretched-exponential term we use the procedure described in Section \ref{zsquared}, given by eqn. (\ref{eqn:tn}) and subsequent equations. This procedure allows for the presence of a confluent power of a logarithm, so that the stretched-exponential term is $\kappa^{n^{\sigma}\log^{\delta}{n}}.$ In this way, based on a series of length 80, we show plots of estimators of $2-\sigma$ and $-\delta$ in Figures \ref{fig:tsig1} and \ref{fig:tdel1}, plotted against $1/n.$ Extrapolating these, we estimate $\sigma \approx 1/2,$ and $\delta \approx 1/2.$ Recall that this is exactly the stretched-exponential behaviour of $(\mathbb{Z} \wr {\mathbb Z})\wr {\mathbb Z}.$

\begin{figure}[h!]
\setlength{\captionindent}{0pt}
\begin{minipage}{0.48\textwidth}
   \includegraphics[width=0.97\linewidth]{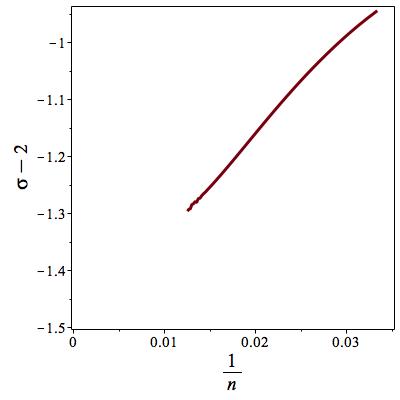} 
   \caption{Estimators of $\sigma-2$ for Thompson's group $F$ vs. $1/{n}.$ }
   \label{fig:tsig1}
\end{minipage}\hfill
\begin{minipage}{0.48\textwidth}
   \includegraphics[width=0.97\linewidth]{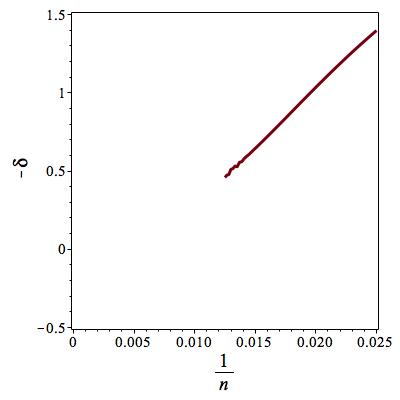} 
   \caption{Estimators of $-\delta$ for Thompson's group $F$ vs. $1/{n}.$  }
   \label{fig:tdel1}
\end{minipage}
\end{figure}

Reverting to the modified ratios, briefly discussed above, we plot these against $1/\sqrt{n}$ in figure \ref{fig:tmrat1}, using 186 terms. One observes that the plot still displays a little curvature, but in Figure \ref{fig:tmrat2} the plot of these same modified ratios against $\sqrt{\log{n}/n},$ is essentially linear. This is the appropriate power to extrapolate against, given our estimates of the stretched-exponential exponents. Extrapolating this to $n \to \infty$ we estimate the limit, which gives the growth constant, to be $14.8-15.1.$ This is well away from 16, which would be required for amenability.

\begin{figure}[h!]
\setlength{\captionindent}{0pt}
\begin{minipage}{0.48\textwidth}
   \includegraphics[width=0.97\linewidth]{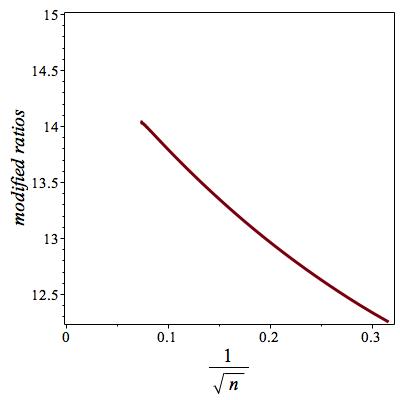} 
   \caption{The first 186 modified ratios for Thompson's group $F$ vs. $1/\sqrt{n}.$ }
   \label{fig:tmrat1}
\end{minipage}\hfill
\begin{minipage}{0.48\textwidth}
   \includegraphics[width=0.97\linewidth]{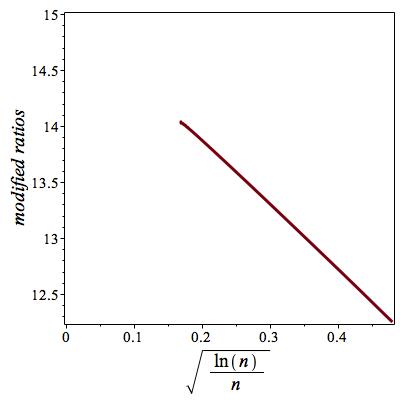} 
   \caption{The first 186 modified ratios for Thompson's group $F$ vs. $\sqrt{\log{n}/n}.$ }
   \label{fig:tmrat2}
\end{minipage}
\end{figure}

One simple test for amenability uses the fact that the ratio of successive coefficients asymptotes to the growth constant $\mu.$ For the lamplighter group, this ratio behaves as 
$$r^{(L)}_n = 9\left ( 1 + \frac{c}{n^{2/3}} + o\left (\frac{1}{n^{2/3}}\right ) \right ).$$   For ${\mathbb Z} \wr {\mathbb Z}$ one has  $$r^{(2)}_n = 16\left ( 1 + \frac{c\cdot {\log^{2/3}{n}}}{n^{2/3}} + o\left (\frac{\log^{2/3}{n}}{n^{2/3}}\right ) \right ),$$ and for the triple wreath product, $W_2,$  the corresponding result is $$r^{(3)}_n = 36\left ( 1 + \frac{c\sqrt{\log{n}}}{n^{1/2}} + o\left (\frac{\sqrt{\log{n}}}{n^{1/2}}\right ) \right ),$$ while for Thompson's group $F$ all we know is
$$r_n = \mu \left ( 1 + {\rm lower\,\,order\,\,terms} \right ),$$ where we suspect that the correction term is similar to that of the triple wreath product of ${\mathbb Z}.$
  
So, a simple test for amenability is to look at the three quotients $$\frac{9r_n}{16r^{(L)}_n},\,\,\, \frac{r_n}{r^{(2)}_n},\,\,\, {\rm and}\,\,\, \frac{4r_n}{9r^{(3)}_n}.$$ If Thompson's group $F$ is amenable, these quotients should all go to 1. In Figures \ref{fig:rtrl},  \ref{fig:rtrz2}, \ref{fig:rtrz3} we show these ratios plotted against $\sqrt{\log{n}/n},$ which is the appropriate power, though this choice is not critical. The ratios do not appear to be going to 1 in any of the three cases. For all cases we have used 200 ratios. To do this, we used the extended ratios for Thompson's group $F$ and also extended the ratios for $W_2$ from the known 132 ratios. Indeed, all three cases are consistent with a limit around $0.93 \pm 0.02,$ corresponding to $\mu = 14.9 \pm 0.3.$ This is entirely consistent with our previous estimate of $\mu \approx 15.0.$

\begin{figure}[h!]
\setlength{\captionindent}{0pt}
\begin{minipage}{0.3\textwidth}
\centerline {  \includegraphics[width=1.1\linewidth]{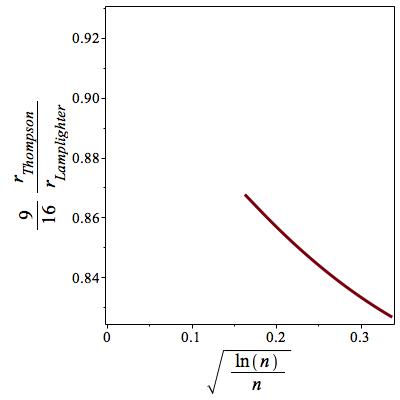}    }
   \caption{ Quotient of Thompson group and lamplighter group ratios using 200 terms.}
   \label{fig:rtrl}
\end{minipage}\hfill
\begin{minipage}{0.3\textwidth}
\centerline {  \includegraphics[width=1.1\linewidth]{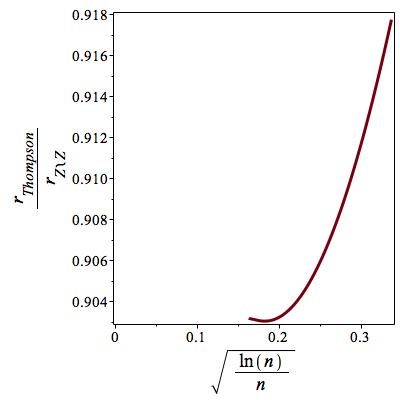}  }
  \caption{ Quotient of Thompson group and $\mathbb{Z} \wr {\mathbb Z}$ ratios using 200 terms.}
   \label{fig:rtrz2}
\end{minipage}\hfill
\begin{minipage}{0.3\textwidth}
\centerline {   \includegraphics[width=1.1\linewidth]{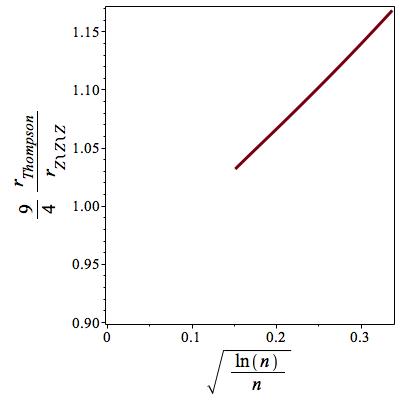}   }
  \caption{ Quotient of Thompson group and $(\mathbb{Z} \wr {\mathbb Z})\wr {\mathbb Z}$ ratios using 200 terms.}
   \label{fig:rtrz3}
\end{minipage}
\end{figure}

Finally, we take the approach of extrapolating the lower bounds produced in Section \ref{sec:moments}. Note that the sequence of bounds $\{b_n\}$ are bounds on $\sqrt{\mu}.$ We have no expectation as to how this sequence should approach its limit, so we first plot the bounds against $1/n$ in figure \ref{fig:tmrat3}. Some curvature is seen, which, as we have shown above, is evidence that the locus behaves as $$b_n \sim b_\infty \left ( 1 + \frac{c_1}{n^\alpha} + \frac{c_2}{n} + \cdots \right ).$$ We remove the term $O(1/n)$  in this case by forming the sequence $b^{(1)}_n = (n\cdot b_n - (n-2) \cdot b_{n-2})/2$ where we have shifted $n$ by 2 to remove the effect of a small odd-even oscillation if one shifts only by 1. We found that plotting $b^{(1)}_n$ against $1/\sqrt{n}$ gave a visually linear plot, and this is shown in figure \ref{fig:tmrat4}. Linearly extrapolating the last two entries gives the estimate $b_\infty \approx 3.875,$ so that $\mu \approx 15.02,$ in agreement with previous estimates above.

\begin{figure}[h!]
\setlength{\captionindent}{0pt}
\begin{minipage}{0.48\textwidth}
   \includegraphics[width=0.97\linewidth]{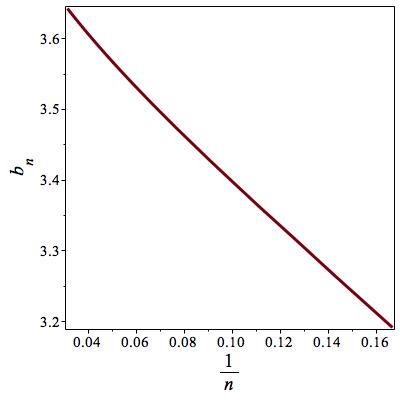} 
   \caption{Plot of bounds $b_n$ for Thompson's group $F$ against $1/n.$ }
   \label{fig:tmrat3}
\end{minipage}\hfill
\begin{minipage}{0.48\textwidth}
   \includegraphics[width=0.97\linewidth]{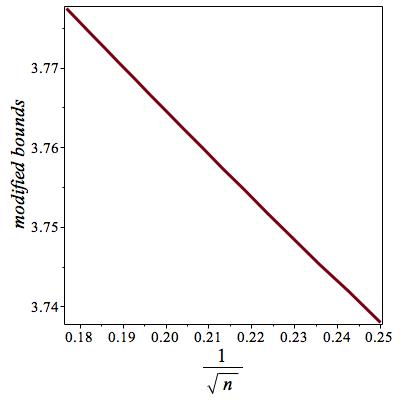} 
   \caption{Plot of modified bounds $b^{(1)}_n$ for Thompson's group $F$ against $1/\sqrt{n}.$ }
   \label{fig:tmrat4}
\end{minipage}
\end{figure}


\section{Conclusion}\label{sec:conclusion}

We have have given polynomial-time algorithms to generate terms of the cogrowth series for several groups. In particular, we have given the first series for the Navas-Brin group $B.$ We have also given an improved algorithm for the coefficients of Thompson's group $F,$ giving 32 terms of the cogrowth series, extending previous enumerations by 7 terms. We analysed these various series to develop numerical techniques to extract the asymptotics, and gave improved asymptotics for the Heisenberg group. We gave an improved lower bound on the growth-rate of the cogrowth series for Thompson's group $F,$ $\mu \ge 13.2693$ using the method in \cite{HHR15}. We generalised their method, showing that the cogrowth sequences for all these groups can be represented as the moments of a distribution. Extrapolation of the sequence of bounds suggests the limit is around 15.0, which is incompatible with amenability.

For Thompson's group $F$ we proved that, if the group is amenable, there cannot be a sub-dominant stretched exponential term in the asymptotics. The numerical data however provides compelling evidence for the presence of such a term. This observation suggests a potential path to a proof of non-amenability. 

We have extended the sequence of 32 terms for group $F$ by a further 200 terms (or, as appropriate, 200 ratios of successive terms), which we demonstrate are sufficiently accurate for the graphical approaches to analysis that we have taken.

A numerical study of the cogrowth sequence $c_n$ gives
$$ c_n \sim c \cdot \mu^n \cdot \kappa^{n^\sigma \log^\delta{n}} \cdot n^g,$$ where $\mu \approx 15,$ $\kappa \approx 1/e,$ $\sigma \approx 1/2,$ $\delta \approx 1/2,$ and $g \approx -1.$ The growth constant $\mu$ must be 16 for amenability. This estimate of the growth constant is the same as that obtained from the extrapolated bounds. These three approaches to the study of amenabilty lead us to the strong belief that Thompson's group $F$ is not amenable.

The difficulties we encountered in analysing ${\mathcal Z} \wr_{98} {\mathcal Z}$ and the Navas-Brin group $B$ does imply that there do exist groups whose cogrowth series are difficult to analyse. Nevertheless, in both those cases we were able to extract the correct asymptotics. Furthermore, the cogrowth series for Thompson's group $F$ did not behave like either of these two ``difficult'' groups, and indeed appeared to have a stretched exponential term with exponent values that were readily estimable. While we cannot rule out the presence of some previously unsuspected pathology in the asymptotic form, we believe that we have presented strong evidence for the belief that Thompson's group $F$ is not amenable

\section{Appendix}

Here are the (predicted) next 200 ratios for Thompson's group $F$. That is, the first ratio here is the coefficient of $z^{32}$ divided by the coefficient of $z^{31}.$ One standard deviation is $1.6 \times 10^{-21}$ for the first ratio, $8.4 \times 10^{-16}$ for the tenth ratio in this list, then $2.2 \times 10^{-13},\,\, 3.8 \times 10^{-11}, \,\, 8.3 \times 10^{-10}, \,\, 7.5 \times 10^{-9}, \,\, 3.1 \times 10^{-8}, \,\, 3.5 \times 10^{-7}, \,\, 1.2 \times 10^{-6}, \,\, 5.3 \times 10^{-6}, \,\, 3.3 \times 10^{-5}, \,\, 9.0 \times 10^{-5}, \,\, 1.3 \times 10^{-4}, $ for the twentieth, thirtieth, fourtieth, fiftieth, seventieth,  ninetieth and hunderd and tenthth, hundred and thirtieth, hundred and fiftieth, hundred and seventy-fifth and two hundredth ratios, respectively.\\ 
\newpage
\resizebox{1 \textwidth}{!}{
\scriptsize{

\begin{adjustbox}{angle=00}
    \begin{tabularhtx}{\textwidth}{\textheight}{|ccc|}
   \interrowfill
12.139382519134640546100910550116506& 12.169952350800835818835333877031972& 12.199326127345853916009149880943422\\  
12.227584513675824849745149961326117& 12.254800541517346423861272221078461& 12.281040527431431456883217428846113\\  
12.306364858371755791208652657890394& 12.330828666879163731631451465102197& 12.354482413853570301131793493786704\\ 
 12.377372393555580912478863352420963& 12.399541172868336566805611620482939& 12.421027974747801610713466511114207\\ 
 12.441869014094574198504147444796094& 12.462097792906309126366784632966479& 12.481745360450139667192011174605863\\ 
 12.500840543278316369259845344418039& 12.519410149156226556638850314986873& 12.537479148348791828180725386201064\\ 
 12.555070835196117213101528584412301& 12.572206972475257876321270606856264& 12.588907920692053314139061768312908\\
 12.605192754131413602669204643768679& 12.621079365262119276409607772782102& 12.636584558849435543137582196072181\\
 12.651724136967118435759923721500215& 12.666512975937091192133384667614540& 12.680965096098118923436248549296051\\
 12.695093725180061251425934047225685& 12.708911355958942268539084708497350& 12.722429798828036178804146087289492\\
 12.735660229810914461533323666877360& 12.748613234321123627064424451294341& 12.761298847540600696316471357098745\\
 12.773726591097905739200796016562265& 12.785905507056272163479082083260714& 12.797844188914176808112395397996995\\ 
 12.809550810208112073629601806420114& 12.821033151370240355220794516217934& 12.832298623805456848288841282862554\\ 
 12.843354291696056209817237940576090& 12.854206894770548244369354384398327& 12.864862864837654839427475941273709\\ 
 12.875328347113943574400714300355914& 12.885609214745162856554802222623966& 12.895711085169493589386718357886965\\ 
 12.905639332151085077301331779253254& 12.915399101706732471897030527017181& 12.924995323677800300649200885376963\\ 
 12.934432723922797241056261010730083& 12.943715839380256116094239676192280& 12.952849018349319276368496909220240\\ 
 12.961836439248190289410424756818502& 12.970682107319771206377319240620875& 12.979389881806587929393146535001536\\ 
 12.987963464694516602362349323441444& 12.996406405676872159827925022024219& 13.004722173676474514325001997839461\\ 
 13.012914070454692135046857458367114& 13.020985259779764800684267751152161& 13.028938801180381908385125175110548\\ 
 13.036777659579423368614400305996310& 13.044504679013068113321447729698093& 13.052122569524550318903068986491322\\ 
 13.059634066593784780607389266957567& 13.067041693422999439936815568787524& 13.074347966865955839456185038291272\\ 
 13.081555222866179498657087709047136& 13.088665732145507263332155324286416& 13.095681797656212049861838152969811\\ 
 13.102605564520384538741633073251136& 13.109439117936998695729698444879843& 13.116184350936469838871049112592397\\ 
 13.122843467661052218534876988958406& 13.129418244175823190427250023546405& 13.135910414942072482775413880736439\\ 
 13.142322234451185942341571946995161& 13.148654712658935599802876138058446& 13.154910018341609905494554223902216\\ 
 13.161089481701929123360329350075889& 13.167195007769142327138772890144629& 13.173227515058212755159290331997399\\ 
 13.179189107961895176661932566775513& 13.185080837993801098011934614407469& 13.190904563910270596599158822879997\\ 
 13.196660991623399487925223492990383& 13.202352151456208348963284966653968& 13.207979050670762502542808795553103\\ 
 13.213542543992015500848473483969209& 13.219044110867187297414288617379158& 13.224484921862603128812749940479344\\ 
 13.229866114857320555465055797943269& 13.235188321357468124261384585429120& 13.240453522986667992419868738673504\\ 
 13.245661794080940364573430988057957& 13.250815166741899371063964386311321& 13.255914147651533345355162750305551\\ 
 13.260959939896671980546840767426691& 13.265952758167346821144011231194731& 13.270895130906341393601998277628536\\ 
 13.275786005549024172548265823401768& 13.280627056720430964223829034834261& 13.285419125190171039873138415974114\\ 
 13.290161972217217703156676577608563& 13.294858653114477361638435828704210& 13.299508767603123209430266638744025\\ 
 13.304113073325880062519253728849004& 13.308670446314943572746000301996392& 13.313185140342015149024233508130128\\ 
 13.317658188282116335265357329472658& 13.322086115777917425146418299128619& 13.326471695739198888194127423197993\\ 
 13.330818453996063928243407490059108& 13.335121539230967797838900373066820& 13.339384163779883682357243658284554\\ 
 13.343606937084326659382677923429016& 13.347786320277756923575569080786819& 13.351930748190988263794674502771721\\ 
 13.356041865699128533066777923821716& 13.360110874363270214471235529540315& 13.364142777237566574247813605863778\\ 
 13.368138079655518061737105379768840& 13.372097273034426235102779701961991& 13.376020835131593723786616298972917\\ 
 13.379909230285796422796584282677189& 13.383762909644345312962700551534152& 13.387582311376027194727073026599142\\ 
 13.391391013985309598551994527199420& 13.395144666969365978147066024081715& 13.398865377039202234681959848510147\\ 
 13.402540990852287889254736126437580& 13.406196253710405879515140382321201& 13.409819665483104102788219559926390\\ 
 13.413411407143425386912606665853825& 13.416988156570367066227724319777381& 13.420539883769483957581690933051480\\ 
 13.424062672750839568072161252780645& 13.427535999385894978339537743097391& 13.430979699159296367928654845659964\\ 
 13.434394074827496721484051571897707& 13.437779418859588380431049420003005& 13.441136013513055381428134566110019\\ 
 13.444482843412927251038029918715121& 13.447782268650965022269207427038709& 13.451072044972393249466607491105591\\ 
 13.454316759036914691255964338022543& 13.457544856354882926610701767183908& 13.460737144623883340126835494564910\\ 
 13.463903116875215568913679808238264& 13.467042454042638170007261022024409& 13.470155360061132928954773338423225\\ 
 13.473213802552343969176414720413487& 13.476302647287223508754966768268084& 13.479337389227819997111748990692973\\ 
 13.482346421441562120101495301037430& 13.485293318045282335887952362450478& 13.488249148435842618350149753783486\\ 
 13.491179584354353752611550171676095& 13.494084747058831414056697557041621& 13.497011084300140602722763616549800\\ 
 13.499895368095523512785232116864023& 13.502701722883121030342927584362952& 13.505547791585498408392624616902183\\ 
 13.508333602964302346321586079889387& 13.511107842566379075799459233305947& 13.513936036377083173562688008219201\\ 
 13.516654351365134683610700727371316& 13.519431847277609941232036080607544& 13.522232264247358404187723455476999\\ 
 13.525000203908312007517604856476096& 13.527641315577516333811095751023520& 13.530300873952004160088528545275672\\ 
 13.532917871456062697723758553984577& 13.535493461803957879581733368101521& 13.538046382601181802194312557622836\\ 
 13.540554273647847830847507251037425& 13.543084323366200838300039176315332& 13.545599058351387123157120673148797\\ 
 13.548063007209007082860967768914526& 13.550504412576211288095020257342519& 13.552923260697021481475229482726246\\ 
 13.555319528240038802193441194513629& 13.557762668158072770433170818766971& 13.560302341521164494357743132135167\\ 
 13.562641374331046546303564833879801& 13.564958044304722734009028672257896& 13.567252292374466826596567631465810\\ 
 13.569777875695742862069575716786180& 13.572043884450805383578987603438786& 13.574288148599622255870938533612831\\ 
 13.576510610487748479571836167835301& 13.578837748814414710933467280925295& 13.581263278311749328511560181131020\\ 
 13.584419560999735066923148194498099& 13.586612599432547359359203779080153& 13.588785775454742632035040598882148\\ 
 13.590831265318841752277570171711095& 13.592959911744010496809257658001969& 13.594915982519419341022644469998104\\ 
 13.596998211959713327539754998388164& 13.599059952154497874738089781219857&  \\ 

     \end{tabularhtx}
   
\end{adjustbox}

}
}
\normalsize
\newpage

\section{Acknowledgements}
We wish to thank Andrew Rechnitzer for many stimulating discussions on this topic, and Murray Elder for helpful comments on the manuscript. AJG wishes to thank Nathan Clisby for his vastly superior version of the program to use differential approximants to predict further terms and ratios.  AEP wishes to thank ACEMS for financial support through a PhD top-up scholarship.

\end{document}